\theoremstyle{plain}
\newtheorem{theorem}{Theorem}[section]
\newtheorem{lemma}[theorem]{Lemma}
\newtheorem{proposition}[theorem]{Proposition}
\theoremstyle{definition}
\newtheorem{definition}[theorem]{Definition}
\newtheorem{example}[theorem]{Example}
\newtheorem{conjecture}[theorem]{Conjecture}
\newcommand{\rleft}{\mathopen{}\mathclose\bgroup\left}
\newcommand{\rright}{\aftergroup\egroup\right}
\newcommand{\C}{{\mathbb{C}}}
\newcommand{\R}{{\mathbb{R}}}
\newcommand{\Z}{{\mathbb{Z}}}
\newcommand{\p}{{\mathbb{P}}}
\newcommand{\Om}{{\mathcal{O}}}
\newcommand{\Pm}{{\mathcal{P}}}
\newcommand{\cL}{{\mathcal{L}}}
\newcommand{\cT}{{\mathcal{T}}}
\newcommand{\diff}{\mathop{}\!d}
\newcommand{\Hom}{{\mathrm{Hom}}}
\newcommand{\Pic}{{\mathrm{Pic}}}
\newcommand{\Vol}{{\mathrm{Vol}}}
\definecolor{uuuuuu}{rgb}{0.26666666666666666,0.26666666666666666,0.26666666666666666}
\definecolor{yqyqyq}{rgb}{0.5019607843137255,0.5019607843137255,0.5019607843137255}
\definecolor{uququq}{rgb}{0.25098039215686274,0.25098039215686274,0.25098039215686274}
\begin{document}
\selectlanguage{english}

\title{Fibered toric varieties}
\dedicatory{To Yulij Sergeievich Ilyashenko on the occasion of his 80-th birthday}
\author{Askold Khovanskii}
\address[A.\,Khovanskii]{Department of Mathematics, University of Toronto, Toronto, Canada; Moscow Independent University, Moscow, Russia.}
\email{askold@math.utoronto.ca}
\author{Leonid Monin}
\address[L.\,Monin]{Institute of Mathematics, EPFL, Lausanne, Switzerland.}
\email{leonid.monin@epfl.ch}

\subjclass[2010]{Primary 14M25, 52B20 Secondary 14C17, 14N10}
\keywords{Toric varieties, toric variety bundles, Newton polyhedra}

\begin{abstract}
A toric variety is called fibered if it can be represented as a total space of fibre bundle over toric base and with toric fiber. Fibered toric varieties form a special case of toric variety bundles. In this note we first give an introduction to the class of fibered toric varieties. 
Then we use them to illustrate some known and conjectural results on topology and intersection theory of general toric variety bundles.
Finally, using the language of fibered toric varieties, we compute the equivariant cohomology rings of smooth complete toric varieties.
\end{abstract}

\maketitle




\section{Introduction}
This paper is devoted to the study of fibered toric varieties. A fibered toric variety is the total space of a fiber bundle over toric base with a toric fiber. A first example of fibered toric variety is a Hirzebruch surface $F_a=\p(\Om_{\p^1}\oplus\Om_{\p^1}(a))$ which forms a $\p^1$-bundle over $\p^1$. More generally, for every split vector bundle $E= \cL_1\oplus\ldots\oplus \cL_k$ on a toric variety $X_\Sigma$ its projectivisation $\p(E)$ is a fibered toric variety. Another example of fibered toric varieties are so-called Bott towers \cite{karsh} which are chains of $\p^1$-bundles over~$\p^1$. 

As usual in toric geometry, the fibered property could be formulated combinatorially. More concretely, on the level of fans fibered toric varieties are classified by fans which can be represented as a twisted product (see Definintion~\ref{def:twistedprod}). On the level of polytopes, this corresponds to linear families of polytopes as in \cite[Definition 1.2]{kavvil} (see also Section~10.3 in \cite{hofscheier2023cohomology}).

Our motivation to study fibered toric varieties is twofold. On one hand, fibered toric varieties form an important class of toric varieties. They naturally appear in   classifications of smooth Fano polytopes with many vertices 
\cite{obro2007classification,obro2008classification,assarf2014smooth} or smooth polytopes of small degree with respect to its dimension \cite{batyrev2007multiples,dickenstein2009classifying}. A construction similar to twisted fans also appears in \cite{joswig2005one} where the authors construct simplicial complexes with large automorphism groups and linear families of polytopes appear in spherical geometry \cite{brion1989groupe}. The language of fibered toric varieties was useful in \cite{lindberg2023algebraic} where the authors computed the algebraic degree of large class of sparse polynomial optimisation problems.

On the other hand, fibered toric varieties are examples of more general toric variety bundles. Toric variety bundles are (partial) equivariant compactifications of a principal torus bundle. One  actively studied class of toric variety bundles is toroidal horospherical varieties, which are toric variety bundles over generalized flag varieties $G/P$. See \cite{Knop:SphEmbd, Timashev:HSEE} for more details. 
Toric variety bundles also appear in the logarithmic Gromov-Witten theory \cite{carocci2021rubber,carocci2022tropical}. 

Toric variety bundles sometimes called just toric bundles (as for example in \cite{US03,hofscheier2023cohomology}). However, we decided to adapt the term toric variety bundles (used for instance in \cite{carocci2021rubber,carocci2022tropical}) to resolve the confusion with toric vector bundles (equivariant vector bundles over toric varieties) \cite{klyachko1990equivariant,payne2008moduli}.

In Theorem~\ref{thm:torbuntor} we show that a toric variety bundle over a toric base has a toric structure and thus is a fibered toric variety. We use this together with the combinatorial description of fibered toric varieties by twisted product of fans to illustrate recent topological results on general toric variety bundles. 

In particular, in Theorem~\ref{toricUS} we describe the cohomology ring of fibered toric variety illustrating a result of \cite{US03} and in Theorem~\ref{toricBKK} we give a version of Bernstein-Kushnirenko-Khovanskii (BKK) theorem which computes the intersection number of divisors on fibered toric variety. This illustrates the results of \cite{hofscheier2023cohomology}. 
Further, in Subsection~\ref{sec:chern} we give a conjectural formula for the Chern classes of the tangent bundle on a toric variety bundle and verify it in the case of fibered toric varieties. We conclude by the computation of the equivariant cohomology ring of a smooth toric variety using the language of fibered toric varieties in Section~\ref{sec:eq}.

\subsection*{Acknowledgments} We would like to thank J. Hofschaier, M.~Joswig, V.~Timorin and A.~Voorhaar for useful conversations and comments on the previous versions of this text.  The first author is partially supported by the Canadian Grant No.~156833-17.

\section{Twisted product of fans and fibered toric varieties}
In this section we give a combinatorial description of fibered toric varieties.
Let $T_B\simeq(\C^*)^k, T_F\simeq (\C^*)^n$ be a pair of algebraic tori with character lattices $M_B\simeq \Z^k, M_F\simeq \Z^n$ and co-character lattices $N_B=\Hom(M_B,\Z), N_F=\Hom(M_F,\Z)$. We further denote by $T=T_B\times T_F$ the product torus with character and co-character lattices $M$ and $N$ respectively. We will denote by $\Sigma_B, \Sigma_F, \Sigma$ rational polyhedral fans in the lattices $N_B, N_F$, and $N$ respectively and by $X_{\Sigma_B}$, $X_{\Sigma_F}$, $X_{\Sigma}$ the corresponding toric varieties with respect to the corresponding torus actions. In this section we do not assume $X_{\Sigma_B}$, $X_{\Sigma_F}$, $X_{\Sigma}$ to be smooth or complete, however, throughout the paper we assume that the rays of $\Sigma_B, \Sigma_F, \Sigma$ span lattices $N_B, N_F$, and $N$ respectively.  For the details on toric geometry we refer to \cite{tor-var}.

The main object considered in this paper is a \emph{fibered toric variety.} We say that $X_\Sigma$ is a fibered $T-$toric variety if there exists a decomposition $T=T_F\times T_B$ and the natural projection $T=T_B\times T_F\to T_B$ induces a toric morphism $\pi\colon X_\Sigma\to X_{\Sigma_B}$ which forms a fibre bundle over base $X_{\Sigma_B}$ with fibre $X_{\Sigma_F}$. 

The combinatorial description of fibered toric varieties is given in terms of \emph{twisted product} of fans. The twisted product $\Sigma_B\ltimes_\Phi \Sigma_F$ of two fans $\Sigma_B,\Sigma_F$ depends on the twisting parameter $\Phi$, which is a cone-wise linear map $\Phi:|\Sigma_B| \to N_F$. More concretely, $\Phi$ is a continuous map on the support of $\Sigma_B$ such that its restriction to every cone $\sigma\in \Sigma_B$ is linear.

\begin{definition}\label{def:twistedprod}
 Let $\Sigma_B,\Sigma_F$ and  $\Phi:|\Sigma_B| \to N_F$ be as before. We define the twisted product  $\Sigma_B\ltimes_\Phi \Sigma_F$ to be a fan in $N=N_B\times N_F$ with the set of cones given by
 \begin{equation}\label{eq:twisted}
\Sigma_B\ltimes_\Phi \Sigma_F = \{ \widetilde\sigma+\tau \,|\,\sigma \in \Sigma_B, \tau \in \Sigma_F\},     
 \end{equation}
where $\widetilde \sigma = \{(x,\Phi(x))\,|\, x\in\sigma\}$ is the graph of $\Phi|_\sigma$.
\end{definition}

It is easy to see that the twisted product of fans is well defined, i.e. the set of cones in \eqref{eq:twisted} defines a rational polyhedral fan in $N$. Moreover, the natural projection $N\to N_B$ induces a morphism of fans $\Sigma=\Sigma_B\ltimes_\Phi \Sigma_F \to \Sigma_B$ and thus a toric morphism between corresponding toric varieties $\pi:X_\Sigma \to X_{\Sigma_B}$.

In the case $\Phi=0$, the twisted product recovers usual direct product of fans.
More generally, the twisted product $\Sigma_B\ltimes_\Phi \Sigma_F$ is combinatorially equivalent to a direct product $\Sigma_B\times \Sigma_F$ for any $\Phi$.

The first result which we need is the following theorem.

\begin{theorem}\label{thm:fibered}
    Let $X_\Sigma$ be a fibered toric variety. That is there is a toric map
    $p\colon X_\Sigma \to X_{\Sigma_B}$ which is a fiber bundle with fiber $X_{\Sigma_F}$ for some fans $\Sigma_B,\Sigma_F$ in $N_B, N_F$ respectively.
    Then $\Sigma =\Sigma_B\ltimes_\Phi \Sigma_F$ is a twisted product  of $\Sigma_B$ and $\Sigma_F$ for some piecewise linear map $\Phi\colon \Sigma_B \to N_F$.
 \end{theorem}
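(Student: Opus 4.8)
The plan is to recover the twisting map $\Phi$ cone by cone from the toric geometry of the special fibers, and then to assemble the cones of $\Sigma$ into the form \eqref{eq:twisted}. First, note that since $p$ is the toric morphism attached to the fibered structure, it is induced by the lattice projection $\rho\colon N=N_B\times N_F\to N_B$ with $\ker\rho=N_F$; in particular $\rho$ is a morphism of fans $\Sigma\to\Sigma_B$, so $\rho(\widetilde\sigma)$ lies in a cone of $\Sigma_B$ for every $\widetilde\sigma\in\Sigma$. I would begin by pinning down the fiber fan. The cones of $\Sigma$ contained in $\ker\rho=N_F$ form a subfan $\Sigma^0$, and by the orbit--cone correspondence $p^{-1}(T_B)$ is the union of the orbits $O_{\widetilde\sigma}$ with $\widetilde\sigma\in\Sigma^0$, so that $p^{-1}(T_B)\cong T_B\times X_{\Sigma^0}$ as $T$-toric varieties with $p$ the projection. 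Since the fiber over the identity of $T_B$ is $X_{\Sigma_F}$ and the identification is $T_F$-equivariant, this forces $\Sigma^0=\Sigma_F$, i.e.\ the cones of $\Sigma$ lying in $N_F$ are exactly $\Sigma_F$.

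Next I would construct $\Phi$ on each cone $\sigma\in\Sigma_B$. Let $x_\sigma\in X_{\Sigma_B}$ be the distinguished point of $\sigma$ and $F_\sigma=p^{-1}(x_\sigma)\cong X_{\Sigma_F}$ the fiber over it. The subtorus $T_F$ preserves every fiber, and the stabilizer $S_\sigma\subseteq T_B$ of $x_\sigma$ (the subtorus with cocharacter lattice $\lspan(\sigma)\cap N_B$) preserves $F_\sigma$; hence $T_F\times S_\sigma$ acts on $F_\sigma$. Using local triviality to see that this action is the toric one on $X_{\Sigma_F}$, the generic stabilizer of $T_F\times S_\sigma$ on $F_\sigma$ is a subtorus of dimension $\dim\sigma$ meeting $T_F$ trivially, hence the graph of a homomorphism $S_\sigma\to T_F$; on cocharacters this is the graph of a linear map $\Phi_\sigma\colon\lspan(\sigma)\to N_F$. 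Reading off the open $T$-orbit of $F_\sigma$, which is a single orbit $O_{\widehat\sigma}$ of dimension $n$, shows that $\widehat\sigma=\{(x,\Phi_\sigma(x))\mid x\in\sigma\}$ is a cone of $\Sigma$, i.e.\ the graph of $\Phi_\sigma|_\sigma$.

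Finally I would check compatibility and completeness. For a face $\gamma\preceq\sigma$, a dimension count shows that the face of $\widehat\sigma$ lying over $\gamma$ again cuts out the open $T$-orbit of $F_\gamma$, which by uniqueness of the open orbit equals $\widehat\gamma$; hence $\Phi_\sigma|_\gamma=\Phi_\gamma$, and the maps $\Phi_\sigma$ glue to a single continuous, cone-wise linear map $\Phi\colon|\Sigma_B|\to N_F$. It then remains to show that every cone of $\Sigma$ has the form $\widetilde\sigma+\tau$ of \eqref{eq:twisted}: restricting over the affine chart $U_\sigma$ and applying the integral shear $(x_B,x_F)\mapsto(x_B,x_F-\Phi_\sigma(x_B))$, local triviality identifies $p^{-1}(U_\sigma)$ $T$-equivariantly with $U_\sigma\times X_{\Sigma_F}$, so the cones over $\sigma$ are exactly the $\widehat{\sigma'}+\tau$ with $\sigma'\preceq\sigma$ and $\tau\in\Sigma_F$. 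This yields $\Sigma=\Sigma_B\ltimes_\Phi\Sigma_F$.

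The main obstacle is precisely the passage from topological local triviality to a toric, i.e.\ $T$-equivariant, product structure: knowing only that the fibers are abstractly isomorphic to $X_{\Sigma_F}$ is not enough, and the real work is to show that over each cone the bundle trivializes compatibly with the torus action after the shear by $\Phi_\sigma$. Establishing this equivariant local triviality, together with the agreement of the linear pieces $\Phi_\sigma$ on common faces, is where the combinatorial rigidity of toric fiber bundles must be used.
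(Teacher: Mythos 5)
The paper itself offers no proof of this theorem: it states that the result appears in a slightly different form in \cite[Proposition~7.3]{oda1978torus} and skips the argument. So your proposal can only be judged on its own terms, and on those terms it is an outline with a genuine gap rather than a proof --- a gap you yourself concede in your final paragraph. The sound parts are: the identification of $p^{-1}(T_B)$ with $T_B\times X_{\Sigma^0}$, where $\Sigma^0$ is the subfan of cones lying in $(N_F)_\R$; the existence of a dense $(S_\sigma\times T_F)$-orbit in the special fiber $F_\sigma$ (this uses irreducibility of $F_\sigma$, which is where the hypothesis $F_\sigma\cong X_{\Sigma_F}$ legitimately enters); and the face-compatibility argument via uniqueness of dense orbits. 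But the two load-bearing claims --- that the generic stabilizer $H\subset S_\sigma\times T_F$ meets $T_F$ trivially (hence is the graph of a homomorphism $S_\sigma\to T_F$, giving an \emph{integral} $\Phi_\sigma$), and that $p^{-1}(U_\sigma)$ trivializes $T$-equivariantly after the shear (your final step) --- are each justified only by the phrase ``using local triviality,'' and then declared at the end to be ``the main obstacle'' and ``the real work.'' A proof whose central step is deferred to a concluding remark is not a proof.

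The gap is real, not a routine verification, because the claim $H\cap T_F=\{e\}$ is exactly the exclusion of multiple fibers, and it does not follow from the orbit--stabilizer formalism you set up. Concretely, take $N=\Z^2$ with projection to $N_B=\Z$, and let $\Sigma$ have rays through $(0,1)$, $(0,-1)$, $(2,1)$ and maximal cones $\langle(0,1),(2,1)\rangle$, $\langle(0,-1),(2,1)\rangle$, mapping to the fan $\{0,\R_{\geq 0}\}$ of $\C$. Every set-theoretic fiber is $\p^1$, yet the generic stabilizer over the origin is $H=\{(s^2,s)\}$, which meets $T_F$ in $\{\pm 1\}$ and whose cocharacter lattice is the graph of the non-integral map $x\mapsto x/2$; the conclusion of the theorem fails (this is a double fiber, and the total space is singular). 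So ruling out this behavior is precisely where the fiber-bundle hypothesis must do its work, and your write-up supplies no mechanism for extracting an equivariant statement from a trivialization that is not assumed equivariant. One clean mechanism: Zariski-local triviality forces the scheme-theoretic fiber over $x_\sigma$ to be reduced, and the multiplicity of the component $\overline{O_{\widehat\sigma}}$ in that fiber equals the lattice index $[\lspan(\sigma)\cap N_B : \rho(N\cap\lspan(\widehat\sigma))]$, so reducedness yields exactly the integrality of $\Phi_\sigma$ and $H\cap T_F=\{e\}$; alternatively, for complete fibers one can argue that an element $t\in T_F$ acts on nearby fibers by automorphisms conjugate to its toric action on $X_{\Sigma_F}$, and since semisimple conjugacy classes in $\mathrm{Aut}(X_{\Sigma_F})$ are closed, $t$ cannot degenerate to the identity on $F_\sigma$. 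Nothing of this sort appears in your argument. Finally, note that your last step cannot be waved through either: the conclusions of your steps 1--3 (existence of integral, face-compatible graph cones $\widehat\sigma$) do \emph{not} by themselves imply that every cone of $\Sigma$ is of the form $\widehat\sigma+\tau$, nor that all such sums are cones, so the decomposition of $\Sigma$ requires invoking the bundle hypothesis a second time --- again through the equivariant triviality you have not established.
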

Theorem~\ref{thm:fibered} appeared in a slightly different form in \cite[Proposition~7.3]{oda1978torus}, so we skip its proof.

\begin{example}
    Let $\Sigma_B, \Sigma_F$ both be the fans of $\mathbb{P}^1$, and let $\Phi\colon \Z \to \Z$ be given by 
    \[
    \Phi(x) = \begin{cases}
    0, \quad\quad\,\, x\leq 0\\
    a\cdot x, \quad x>0
               \end{cases}
    \]
    for some $a\in \Z$. Then, the twisted product $\Sigma=\Sigma_B\ltimes_\Phi \Sigma_F$ is shown in Figure~\ref{fig:hirz} and is a fan of the Hirzebruch surface $F_a= \p(\Om_{\p^1}\oplus\Om_{\p^1}(a))$. The natural projection $\pi\colon F_a\to \p^1$ determined by the projective bundle structure is given by the map of fans $\Sigma\to \Sigma_B$ induced by the natural projection $N\to N_B$.
\end{example}

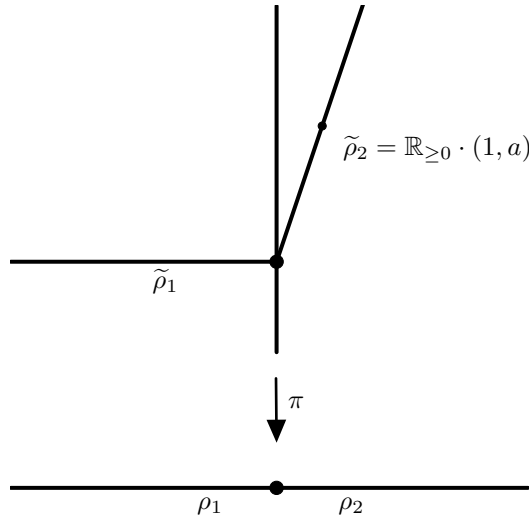
\begin{figure}
    \centering
\definecolor{cqcqcq}{rgb}{0.7529411764705882,0.7529411764705882,0.7529411764705882}
\begin{tikzpicture}[line cap=round,line join=round,>=triangle 45,x=.3cm,y=.3cm]
\clip(-9.656376051369712,-10.931241342795854) rectangle (13.022804558229346,13.306185321663618);
\draw [line width=1.5pt,domain=2:13.022804558229346] plot(\x,{(-8--6*\x)/2});
\draw [line width=1.5pt,domain=-9.656376051369712:2] plot(\x,{(-20-0*\x)/-10});
\draw [line width=1.5pt] (2,2) -- (2,13.306185321663618);
\draw [line width=1.5pt] (2,2)-- (2,-2);
\draw [line width=1.5pt,domain=1.9987355216627098:13.022804558229346] plot(\x,{(-15.990953231520804-0*\x)/2});
\draw [line width=1.5pt,domain=-9.656376051369712:1.9987355216627098] plot(\x,{(--23.98642984728116-0*\x)/-3});
\draw (-1.8904748123251864,-8) node[anchor=north west] {$\rho_1$};
\draw (4.3,-8) node[anchor=north west] {$\rho_2$};
\draw (-3.849131319517832,2.1) node[anchor=north west] {$\widetilde\rho_1$};
\draw (4.5,8.) node[anchor=north west] {$\widetilde\rho_2 = \mathbb{R}_{\geq 0}\cdot(1,a)$};
\draw [->,line width=1pt] (1.9748956573337617,-3.155745261440391) -- (2,-6);
\draw (2.095562991786163,-3.5396856933507665) node[anchor=north west] {$\pi$};
\begin{scriptsize}
\draw [fill=black] (2,2) circle (2.5pt);
\draw [fill=black] (1.9987355216627098,-7.995476615760395) circle (2.5pt);
\draw [fill=black] (4,8) circle (1.5pt);
\end{scriptsize}
\end{tikzpicture}
\caption{Hirzebruch surface $F_a$ fibered over $\mathbb{P}^1$.}
\label{fig:hirz}
\end{figure}

\begin{proposition}\label{prop:iso}
    Let $\Phi,\Phi'$ be two piecewise linear maps on $\Sigma_B$, such that $\Phi-\Phi'\in \Hom(N_B,N_F)$ is a global linear map. Then  $X_\Sigma$ and $X_{\Sigma'}$ with 
    \[
\Sigma=\Sigma_B\ltimes_\Phi \Sigma_F,\quad \Sigma=\Sigma_B\ltimes_{\Phi'} \Sigma_F
    \]
    are isomorphic as fibered toric varieties over $X_{\Sigma_B}$.
\end{proposition}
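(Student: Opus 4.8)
The plan is to exhibit an explicit isomorphism of the ambient lattice $N = N_B\times N_F$ — a ``shear'' in the fibre direction — and to verify that it carries one twisted product fan onto the other while commuting with the projection to $N_B$. Recall that a lattice isomorphism mapping one fan onto another induces an isomorphism of the associated toric varieties, so it suffices to work purely on the level of lattices and cones.

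First I would set $\psi := \Phi - \Phi'$, which by hypothesis lies in $\Hom(N_B,N_F)$, i.e.\ it is a genuine globally defined integral linear map (this is the single place where the assumption is used). Define $L\colon N\to N$ by $L(x,y) = (x,\, y + \psi(x))$. This is an automorphism of the lattice $N$: it is $\Z$-linear because $\psi$ is, and it is invertible with inverse $(x,y)\mapsto (x,\, y - \psi(x))$. Note that $L$ is the identity in the first coordinate and restricts to the identity on the fibre factor $N_F$.

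Next I would check that $L$ maps $\Sigma' = \Sigma_B\ltimes_{\Phi'}\Sigma_F$ onto $\Sigma = \Sigma_B\ltimes_{\Phi}\Sigma_F$, matching cones. A typical generating cone of $\Sigma'$ is $\widetilde\sigma' + \tau = \{(x,\,\Phi'(x)+y)\mid x\in\sigma,\ y\in\tau\}$ for $\sigma\in\Sigma_B$ and $\tau\in\Sigma_F$. Applying $L$ and using $\psi(x) = \Phi(x)-\Phi'(x)$ gives
\[
L\bigl(x,\,\Phi'(x)+y\bigr) = \bigl(x,\,\Phi(x)+y\bigr),
\]
so that $L(\widetilde\sigma' + \tau) = \widetilde\sigma + \tau$, the corresponding generating cone of $\Sigma$. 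Since $L$ is a lattice automorphism it preserves all incidences and faces, hence it induces a bijection $\Sigma'\to\Sigma$ of the two fans.

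Finally, by the standard functoriality of the toric construction, the lattice isomorphism $L$ carrying $\Sigma'$ onto $\Sigma$ induces a toric isomorphism $X_{\Sigma'}\to X_\Sigma$; and because $L$ is the identity in the first coordinate it commutes with the projection $N\to N_B$, so the induced isomorphism fits into a commutative triangle over $X_{\Sigma_B}$ and is therefore an isomorphism of fibered toric varieties, as claimed. The computation is entirely mechanical; the only points requiring care are confirming that $L$ is genuinely a \emph{lattice} automorphism (where integrality of $\Phi-\Phi'$ is essential) and that it is a bijection on the set of cones rather than merely mapping cones into cones. Neither of these presents a real obstacle.
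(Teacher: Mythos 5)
Your proof is correct and follows essentially the same route as the paper's: both construct the shear automorphism $(x,y)\mapsto(x,\,y+\psi(x))$ of $N=N_B\times N_F$ from the global linear difference $\psi=\Phi-\Phi'$ and observe that it carries one twisted fan onto the other while commuting with the projection to $N_B$. Your version simply spells out the cone-by-cone verification that the paper leaves implicit.
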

\begin{proof}
    Indeed, let 
\[
\Phi' = \Phi + \phi, \quad \phi \in \Hom(N_B,N_F).
\]
Then we have $\Sigma' = \widetilde \phi(\Sigma)$, where
\[
\widetilde{\phi}: N_B\times N_F \to N_B\times N_F, \quad \widetilde{\phi}: (v_1,v_2) \mapsto (v_1, v_2 + \phi(v_1)).
\]
Since $\widetilde\phi$ is an automorphism of the lattice $N$ which preserves $N_B$ it provides an isomorphism between $X_{\Sigma}$ and $X_{\Sigma'}$ as fibered toric varieties.
\end{proof}

\subsection{Toric variety bundles} In this subsection we briefly recall the notion of toric variety bundles. For a more detailed introduction to toric variety bundles see \cite{hofscheier2023cohomology}.

Toric variety bundles are (partial) equivariant compactifications of principal  torus bundle. More concretely, let $T$ be an algebraic torus  and let $p \colon E \to B$ be a \emph{$T$-principal bundle} over an algebraic variety $B$. Then, for every 
$T$-toric variety $X_\Sigma$, we define the associated toric variety bundle $E_\Sigma$ as 
\[
E_\Sigma = E \times_T X_\Sigma \coloneqq (E \times X_\Sigma) / T.
\]
A toric variety bundle $E_\Sigma$ comes with the natural projection $p\colon E_\Sigma\to B$ which is a fiber bundle over $B$ and with fiber $X_\Sigma$. Moreover, $E_\Sigma$ admits a $T$-action preserving the fibers of $p$.

To get a better understanding of toric variety bundles, let us give a description of $T$-principal bundles. A $T$-principle bundle $p:E\to B$ defines a group homomorphism $c\colon M\to \Pic(B)$ defined as follows. Any character $\lambda \in M$ defines a one--dimensional representation $\C_\lambda$ of $T$, namely $t \cdot z = \lambda(t) z$ for $t \in T$, and $z \in \C_\lambda$.
If $\cL_\lambda$ denotes the associated complex line bundle on $B$, i.e. $\cL_\lambda\simeq E\times_T\C_\lambda$, then $\cL_{\lambda+\mu}=\cL_\lambda\otimes\cL_\mu$, and thus we obtain a group homomorphism:
\[
c\colon M \to \Pic(B),\quad \lambda\mapsto \cL_\lambda.
\]

\begin{example}
    Consider the torus bundle $p \colon (\C^2 \setminus \{0\}) \to \mathbb{P}^1$ where $T = \C^*$ acts  diagonally on $\C^2\setminus \{0\}$, i.e., $t \cdot x = (t^{-1}x_0, t^{-1}x_1)$ for $t \in \C^*$ and $x=(x_0,x_1) \in \C^2\setminus \{0\}$.
    Let $k \in M_T = \Z$.
    Then the $T$-action on $\C_k$ is given by $t\cdot z = t^k z$.
    For $k=1$, we get that $T$ acts on $(\C^2\setminus \{0\}) \times \C_1$ via $t\cdot ((x_0,x_1),z) = ((t^{-1}x_0,t^{-1}x_1),tz)$.

    The following map induces an isomorphism between $\cL_1$ and the tautological bundle on $\mathbb{P}^1$ as it factors through the quotient $(\C^2 \setminus \{0 \})\times_{\C^*} \C_1 =((\C^2 \setminus \{0 \})\times \C_1)/\C^*$:
    \[
        (\C^2 \setminus \{0\}) \times \C_1 \to \mathbb{P}^1\times \C^2; \qquad ((x_0,x_1),z) \mapsto ([x_0:x_1], (zx_0, zx_1))
    \]
    Hence, we get that $\cL_1 = \Om_{\mathbb{P}^1}(-1)$, and thus $\cL_1=-1 \in \Pic(\p^1)=\Z$. More generally, for $k\in M=\Z$, we get $c(k) = -k \in \Pic(\p^1)=\Z$.
\end{example}

Moreover, the homomorphism $c\colon M\to \Pic(B)$ uniquely defines the principal bundle $E\to B$. Given a homomorphism $c\colon M\to \Pic(B)$, one can recover the principal bundle $E\to B$ in the following way. Let $u_1,\ldots,u_n$ be a basis of $M$, and let $\cL_i = c(u_i)$ be the corresponding line bundles on $B$ for $i=1,\ldots,n$. Let further $E\subset \cL_1\oplus\ldots\oplus\cL_k$ be the complement to coordinate vector subbundles $\cL_{i_1}\oplus\ldots\oplus\cL_{i_s}$, with $s<k$. Then the algebraic torus $T\simeq(\C^*)^k$ is acting freely on $E$ via coordinate-wise scaling, and $E\to X$. We arrive at the following result.

\begin{proposition}
  \label{prop:toricbun}
  Let $B$ be an algebraic variety and $T$ be an algebraic torus with character lattice $M$. Then $T$-principal  bundles over $B$ are in bijection with homomorphisms $c\colon M\to \Pic(B)$.
\end{proposition}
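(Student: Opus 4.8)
The plan is to produce explicit maps in both directions and check that they are mutually inverse, after reducing the whole statement to the rank-one case $T=\C^*$, where it becomes the classical identification of $\C^*$-torsors with line bundles.

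First I would record that the forward assignment is already available: sending a $T$-principal bundle $E\to B$ to the homomorphism $c_E\colon\lambda\mapsto[\cL_\lambda]$, where $\cL_\lambda=E\times_T\C_\lambda$, is well defined on isomorphism classes and is a group homomorphism because $\cL_{\lambda+\mu}\cong\cL_\lambda\otimes\cL_\mu$, as noted above. For the reverse direction I would fix a basis $u_1,\dots,u_n$ of $M$, which identifies $T\simeq(\C^*)^n$, and given $c\colon M\to\Pic(B)$ set $\cL_i\coloneqq c(u_i)$ and let $E_c$ be the fibrewise complement in $\cL_1\oplus\cdots\oplus\cL_n$ of the locus where some coordinate vanishes, as in the construction preceding the statement; the coordinate-wise action makes $E_c\to B$ a $T$-principal bundle.

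The core of the argument is to reduce to $n=1$. Since $(\C^*)^n$ is a special algebraic group, every $T$-torsor is Zariski-locally trivial, and the extension of structure group along the $i$-th character $u_i\colon T\to\C^*$ produces a $\C^*$-bundle $E_i=E\times_T\C^*$ whose associated line bundle is exactly $\cL_{u_i}$. For $\C^*$ the maps $L\mapsto L^\times$ (the total space minus its zero section) and $P\mapsto P\times_{\C^*}\C$ are mutually inverse on isomorphism classes, giving the bijection between $\C^*$-bundles and $\Pic(B)$. Recombining the factors, $E\cong E_1\times_B\cdots\times_B E_n$, so $E$ is reconstructed from the line bundles $\cL_{u_1},\dots,\cL_{u_n}$; equivalently, $E\mapsto(\cL_{u_1},\dots,\cL_{u_n})$ is a bijection onto $\Pic(B)^n$, and one checks that the fibrewise-complement construction $E_c$ recovers precisely $E_1\times_B\cdots\times_B E_n$.

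Finally I would match this with the intrinsic map $c$. The basis identifies $\Pic(B)^n\simeq\Hom(M,\Pic(B))$ by sending a tuple to the homomorphism taking $u_i$ to its $i$-th entry, and under this identification the tuple $(\cL_{u_1},\dots,\cL_{u_n})$ corresponds precisely to $c_E$, since $c_E(u_i)=[\cL_{u_i}]$ and a homomorphism is determined by its values on a basis. This yields $c_{E_c}=c$ and $E_{c_E}\cong E$, completing the bijection. The step I expect to be the main obstacle is this reduction to the rank-one case: it is where Zariski-local triviality of $T$-torsors (the special-group property) is used to rule out étale or analytic subtleties, and where one must check that reconstructing $E$ from the $\cL_{u_i}$ is independent of the chosen basis, which follows because $c_E$ is a homomorphism and hence intrinsic.
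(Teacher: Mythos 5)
Your proposal is correct and takes essentially the same route as the paper: the same forward map $E\mapsto(\lambda\mapsto[\cL_\lambda])$, and the same inverse construction via a basis $u_1,\dots,u_n$ of $M$ and the complement of the coordinate subbundles in $\cL_1\oplus\cdots\oplus\cL_n$, which is exactly your fiber product $\cL_1^\times\times_B\cdots\times_B\cL_n^\times$. The paper offers no verification beyond these constructions, so your rank-one reduction (via speciality of $T$) and the mutual-inverse check are added detail rather than a different method; the only point to watch is the sign convention, since under the paper's own normalization (see its example for split bundles, where $c(\lambda)=\cL_1^{-\lambda_1}\otimes\cdots\otimes\cL_k^{-\lambda_k}$) the recipe $\cL_i=c(u_i)$ literally reproduces $-c$ rather than $c$ --- a harmless discrepancy the paper itself elides.
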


\begin{example}\label{ex:splitc}
    Let $X$ be an algebraic variety and let $\cL_1,\ldots,\cL_k$ be line bundles on $X$. Suppose $E\subset \cL_1\oplus \ldots \oplus \cL_k$ is the principal $T=(\C^*)^k$-bundle as before.
    Then for a character $\lambda = (\lambda_1, \ldots, \lambda_k) \in M_T = \Z^k$, we have $\cL_{-\lambda}= \cL_1^{\lambda_1} \otimes \ldots \otimes \cL_k^{\lambda_k}$. In other words, the homomorphism $c\colon M_T\to \Pic(X)$ is given by
    \[
        c\colon (\lambda_1,\ldots,\lambda_k) \mapsto \cL_1^{-\lambda_1} \otimes \ldots \otimes \cL_k^{-\lambda_k} \in \Pic(X).
    \]
\end{example}

\subsection{Toric variety bundles over toric base} 
In this subsection we want to study toric variety bundles $E_\Sigma$ over a toric base $B$. Our main result is the following theorem.
\begin{theorem}\label{thm:torbuntor}
    Let $T_B,T_F$ be algebraic tori. Let $B=X_{\Sigma_B}$ and $F=X_{\Sigma_F}$ be two toric varieties with respect to $T_B$ and $T_F$ actions respectively. Then every toric variety bundle over $B$ with fiber $F$ can be equipped with a structure of a $T=T_B\times T_F$-toric variety given by a fan $\Sigma = \Sigma_B\ltimes_\Phi \Sigma_F$ for some $\Phi$.
\end{theorem}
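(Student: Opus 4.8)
The plan is to translate the geometric classifying data of the principal bundle into the combinatorial twisting parameter $\Phi$, and then to match the two constructions chart by chart over the base. First I would use Proposition~\ref{prop:toricbun} to encode the underlying $T_F$-principal bundle $E\to B$ by its classifying homomorphism $c\colon M_F\to\Pic(B)$, so that the given toric variety bundle is $E\times_{T_F}X_{\Sigma_F}$. The key observation is that, because $B=X_{\Sigma_B}$ is toric with rays spanning $N_B$, the Picard group is computed combinatorially: every line bundle carries an equivariant structure and $\Pic(X_{\Sigma_B})=\Pic^{T_B}(X_{\Sigma_B})/M_B$, where $\Pic^{T_B}(X_{\Sigma_B})$ is the group of integral piecewise linear (support) functions on $\Sigma_B$. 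Since $M_F$ is free, $c$ lifts to a homomorphism $c^{T}\colon M_F\to\Pic^{T_B}(X_{\Sigma_B})$, i.e. to a family of support functions $\psi_\lambda$ depending $\Z$-linearly on $\lambda\in M_F$. Setting $\langle\lambda,\Phi(x)\rangle=\psi_\lambda(x)$ defines a cone-wise linear map $\Phi\colon|\Sigma_B|\to N_F$, integrality of the $\psi_\lambda$ guaranteeing that $\Phi$ sends lattice points into $N_F$. This $\Phi$ is the candidate twisting parameter.

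Next I would compare $E\times_{T_F}X_{\Sigma_F}$ with $X_\Sigma$, $\Sigma=\Sigma_B\ltimes_\Phi\Sigma_F$, over the affine charts of the base. Over a maximal cone $\sigma\in\Sigma_B$ the functions $\psi_\lambda$ restrict to a single linear function, and these assemble into a global linear map $\Phi_\sigma\in\Hom(N_B,N_F)$ agreeing with $\Phi$ on $\sigma$. The shear $\widetilde{\Phi_\sigma}\colon(v_1,v_2)\mapsto(v_1,v_2-\Phi_\sigma(v_1))$ is a lattice automorphism of $N$ carrying the subfan $\{\widetilde\sigma+\tau\mid\tau\in\Sigma_F\}$ onto $\sigma\times\Sigma_F$, so over the chart $U_\sigma$ the toric variety $X_\Sigma$ restricts to $U_\sigma\times X_{\Sigma_F}$. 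On the other hand, the Picard group of an affine toric variety is trivial, so $E$, and therefore the associated bundle, also restricts to $U_\sigma\times X_{\Sigma_F}$. It then remains to check that the two families of charts are glued by the same transition data.

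The main obstacle, and the heart of the argument, is this transition-function matching. On an overlap $U_\sigma\cap U_{\sigma'}$ the gluing of the charts of $X_\Sigma$ is given by $\widetilde{\Phi_{\sigma'}}\circ\widetilde{\Phi_\sigma}^{-1}$, the shear by the globally linear map $\Phi_\sigma-\Phi_{\sigma'}\in\Hom(N_B,N_F)$. Under the canonical identifications $\Hom(N_B,N_F)\simeq\Hom(M_F,M_B)\simeq\Hom(T_B,T_F)$ this shear acts on the fiber exactly as the $T_F$-valued monomial transition function whose pairing with $\lambda\in M_F$ is $\chi^{\Phi_\sigma^*\lambda-\Phi_{\sigma'}^*\lambda}$. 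On the bundle side, the transition functions of $E$ on $U_\sigma\cap U_{\sigma'}$ are read off from the equivariant trivializations of the line bundles $\cL_\lambda=c(\lambda)$ and, by the very construction of $\Phi$ from $c^{T}$, are given by precisely these monomials. Matching them — with care about the duality conventions and the sign appearing in Example~\ref{ex:splitc} — identifies $E\times_{T_F}X_{\Sigma_F}$ with $X_\Sigma$ as fiber bundles over $B$ and transports to it the $T=T_B\times T_F$-toric structure of $X_\Sigma$.

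Finally, I would note that the lift $c^{T}$ is unique up to $M_B$, equivalently $\Phi$ is unique up to a global element of $\Hom(N_B,N_F)$; hence by Proposition~\ref{prop:iso} the resulting fibered toric variety, and in particular the claim of the theorem, is independent of the choices made. The only genuinely technical points are the surjectivity and kernel of the forgetful map $\Pic^{T_B}\to\Pic$ used in the first step and the bookkeeping of dual lattices in the transition-function computation; everything else is formal once $\Phi$ has been defined.
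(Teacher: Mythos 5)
Your proposal is correct, but it takes a genuinely different route from the paper. The paper's proof is short and abstract: it realizes $E$ inside $\cL_1\oplus\ldots\oplus\cL_k$, uses the fact that every line bundle on a toric variety admits a $T_B$-equivariant structure to extend the $T_B$-action to $E$ (commuting with the scaling $T_F$-action), concludes that $E_{\Sigma_F}=(E\times X_{\Sigma_F})/T_F$ carries a $T=T_B\times T_F$-toric structure, and then invokes Theorem~\ref{thm:fibered} (i.e.\ the Oda--Miake classification of fibered toric varieties) to deduce that the fan must be a twisted product $\Sigma_B\ltimes_\Phi\Sigma_F$; the explicit twisting data $\Phi_E$ is only identified in the discussion following the theorem, via condition~\eqref{eq:phi}. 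You instead bypass Theorem~\ref{thm:fibered} entirely: you build $\Phi$ directly from a lift $\bar{c}\colon M_F\to\Pm_{\Sigma_B}$ of the classifying homomorphism $c$ of Proposition~\ref{prop:toricbun} (the lift exists since $M_F$ is free and $\Pic(X_{\Sigma_B})\simeq\Pm_{\Sigma_B}/M_B$), and then verify by hand that $X_{\Sigma_B\ltimes_\Phi\Sigma_F}$ and $E\times_{T_F}X_{\Sigma_F}$ are isomorphic over $X_{\Sigma_B}$, matching charts via the shear automorphisms over maximal cones and matching transition functions on overlaps (where $\Phi_\sigma-\Phi_{\sigma'}$ vanishes on $\sigma\cap\sigma'$, so the corresponding monomial map is invertible there, as needed). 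Both arguments share the same key input --- equivariant structures on line bundles over a toric base --- but your version is self-contained and constructive: it produces $\Phi$ inside the proof (which the paper needs later anyway, e.g.\ for Theorems~\ref{toricUS} and~\ref{toricBKK}) and does not rely on the cited classification result; the price is the transition-function bookkeeping, which you correctly identify as the technical heart but leave at the level of a sketch (it is the standard correspondence between integral piecewise linear functions and equivariantly trivialized line bundles, so no gap). Your final uniqueness remark via Proposition~\ref{prop:iso} matches the paper's discussion after its proof.
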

\begin{proof}
    To show the theorem, it is enough to prove that for a principal bundle $E\to X_{\Sigma_B}$ and any fan $\Sigma_F$, the corresponding toric variety bundle $E_{\Sigma_F}$ is can be equipped with a structure of a toric variety with respect to $T$. Indeed, the rest follows from Theorem~\ref{thm:fibered}. 

    To show that $E_{\Sigma_F}$ has a structure of a toric variety first let us realize the principal bundle $E$ as a complement of coordinate subbundles in $\cL_1\oplus\ldots\oplus\cL_k$. Since every line bundle on a toric variety can be equipped with an $T_B$-equivariant structure, we get that the action of $T_B$ could be extended to the action on $E$. Moreover, the action of $T_B$ on $E$ commutes with the action of $T_F\simeq (\C^*)^n$ given by coordinatewise scaling of $\cL_1\oplus\ldots\oplus\cL_k$. 
    Therefore, 
    \[
    E_{\Sigma_F} = E\times_{T_F} X_{\Sigma_F} = (E\times X_{\Sigma_F})/T_F
    \]
    inherits the action of $T_B$ from the action on $E\times X_{\Sigma_F}$ (with trivial action on $X_{\Sigma_F}$). Thus one can define an action of $T=T_B\times T_F$ on $E_{\Sigma_F}$  which makes it into a $T$-toric variety.
\end{proof}

Any two toric structures on $E_{\Sigma_F}$ constructed in the proof of Theorem~\ref{thm:torbuntor} are isomorphic. To see this, we first need to understand a picewise linear map $\Phi:\Sigma_B\to N_F$ which appears as a twisting data for the product of fans $\Sigma_F$ and $\Sigma_B$ in Theorem~\ref{thm:torbuntor}. First, $\Phi$ only depends on the principal bundle $E\to X_{\Sigma_B}$, thus we will denote it by $\Phi_E$. To construct $\Phi_E$, we will have to understand a homomorphism $c\colon M_F\to \Pic(X_{\Sigma_B})$ in more detail. 

Recall the description of the Picard group $\Pic(X_{\Sigma_B})$. Let $\rho_1,\ldots, \rho_r$ be the rays of $\Sigma_B$ and let $e_1,\ldots,e_r$ be their primitive generators. Let us further denote by $\Pm_{\Sigma_B}$ the group of integer piecewise linear functions with respect to $\Sigma_B$. Each piecewise linear function $h\in\Pm_{\Sigma_B}$ defines a line bundle $\cL_h$ in the following way:
\[
\cL_h=\Om_{X_{\Sigma_B}}(h(e_1)D_1+\ldots+h(e_r)D_r)
\]
where $D_1,\ldots,D_r$ are torus invariant divisors corresponding to rays $\rho_1,\ldots, \rho_r$ respectively. Under this correspondence, global linear functions define the trivial line bundle on $X_{\Sigma_B}$ Thus we have an isomorphism $\Pic(X_{\Sigma_B})\simeq\Pm_{\Sigma_B}/M$.

Let $E\to X_{\Sigma_B}$ be a $T_F$-principal bundle, and let $c\colon M_F\to \Pic(X_{\Sigma_B})$ be as before. Let us choose any lift $\bar{c}\colon M_F\to \Pm_{\Sigma_B}$ of the homomorphism $c$. Such a choice 
is equivalent to the  choice of compatible linearizations on line bundles $\cL_\lambda$ and thus defines a $T_B$-equivariant structure on $E$. Moreover $\bar{c}$ is unique up to a global linear map $l\in \Hom(M_F\to M_B)$. 

Then by the computation in Example~\ref{ex:splitc}, the twisting data $\Phi_E\colon\Sigma_B\to N_F$ is defined by the condition that
\begin{equation}\label{eq:phi}
    x\mapsto \Phi_E(x), \text{ such that }\langle\Phi_E(x), \lambda\rangle = -\bar{c}(\lambda)(x)\text{ for any } \lambda\in M_F.
\end{equation}
Note that since the lift $\bar{c}$ is only defined up to a global linear map $l\in \Hom(M_F\to M_B)$, the twisting data $\Phi_E(x)$ is unique up to a global homomorphism $l^\vee \in \Hom(N_B,N_F)$. Therefore, by Proposition~\ref{prop:iso}, any two twisting data for $E$ define isomorphic fibered toric variety structures on $E_{\Sigma_F}$. 

Similarly, using condition from \eqref{eq:phi}, one can recover the map $\bar{c}\colon M_F\to \Pm_{\Sigma_B}$ and thus the $T_B$-equivariant structure on $E$ from the twisting data~$\Phi$. Therefore, $T_B$-equivariant principal $T_F$ bundles over $X_{\Sigma_B}$ are classified by piecewise linear maps (with respect to $\Sigma_B$) $\Phi\colon N_B\to N_F$. This correspondence was generalized to more general equivariant principal bundles over toric varieties in \cite{kaveh2022toric,kaveh2023toric}.

\section{Intersection theory and cohomology rings of fibered toric varieties}
In this section we will illustrate some results on intersection theory of general toric variety bundles by considering a special case of fibered toric varieties. 
In this section we assume fans $\Sigma_F, \Sigma_B$ to be smooth and projective. In particular this implies that the twisted product $\Sigma=\Sigma_B\ltimes_\Phi \Sigma_F$ is also smooth and projective.
We start with the computation of cohomology rings of smooth toric variety bundles.

Let us first formulate theorem for toric variety bundles proved by Sankaran-Uma in \cite{US03}. Let $B$ be a smooth complete variety and $E\to B$ be a $T_F$-principal bundle. We will denote by $c_{top}$ the homomorphism
\[
c_{top}\colon M_F\to H^2(B,\Z),\quad  \lambda\mapsto c_1(\cL_\lambda),
\]
where $c_1(\cL_\lambda)$ is the first Chern class of a line bundle. Finally, let us denote by $\rho_1,\ldots,\rho_r$ the rays of $\Sigma_F$ and by $e_1,\ldots,e_r$ their primitive generators.

\begin{theorem}[\cite{US03}]
  \label{thm:US}
 Let $E\to B$, $\Sigma_F$  be as above and let $E_{\Sigma_F}$ be the corresponding toric variety bundle. Then the cohomology ring $H^*(E_{\Sigma_F},\R)$ is isomorphic (as an $H^*(B, \R)$-algebra) to the quotient of $H^*(B,\R)[x_1, \ldots, x_r]$ by
  \[
    \left\langle x_{j_1} \cdots x_{j_k} \colon \rho_{j_1}, \ldots, \rho_{j_k} \; \text{do not span a cone of} \; \Sigma \right\rangle + \left\langle c_{top}\left( \lambda \right) - \sum_{i=1}^n \langle e_i, \lambda \rangle x_i \colon \lambda \in M \right\rangle  \text{.}
  \]
\end{theorem}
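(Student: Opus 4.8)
The plan is to use the Leray--Hirsch theorem to reduce the statement to a purely algebraic comparison of graded $H^*(B,\R)$-modules. First I would produce the distinguished classes $x_i \in H^2(E_{\Sigma_F},\R)$: each ray $\rho_i$ of $\Sigma_F$ determines a $T_F$-invariant divisor $D_i \subset X_{\Sigma_F}$, and setting $\mathcal D_i = E\times_{T_F} D_i$ gives a divisor in $E_{\Sigma_F}$ whose class $x_i$ restricts to $[D_i]$ on every fiber. Since $X_{\Sigma_F}$ is smooth and projective, its cohomology is concentrated in even degrees, torsion free, and generated by the $[D_i]$ (Jurkiewicz--Danilov); hence the monomials in the $x_i$ forming a basis of $H^*(X_{\Sigma_F},\R)$ restrict to a basis of the fiber cohomology. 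Leray--Hirsch then applies and shows $H^*(E_{\Sigma_F},\R)$ is a free $H^*(B,\R)$-module with such monomials as a basis; in particular the $H^*(B,\R)$-algebra map $\psi\colon H^*(B,\R)[x_1,\ldots,x_r]\to H^*(E_{\Sigma_F},\R)$, $x_i\mapsto x_i$, is surjective.

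Next I would check that the two families of relations lie in $\ker\psi$. The Stanley--Reisner monomials vanish because if $\rho_{j_1},\ldots,\rho_{j_k}$ do not span a cone of $\Sigma_F$, then $D_{j_1}\cap\cdots\cap D_{j_k}=\emptyset$ fiberwise, so $\mathcal D_{j_1}\cap\cdots\cap\mathcal D_{j_k}=\emptyset$ and the product of the classes is zero. For the linear relations I would interpret, for each $\lambda\in M_F$, the character $\chi^\lambda$ as a rational section of the pullback $\pi^*\cL_\lambda$: it is $T_F$-semi-invariant of weight $\lambda$ on the fiber, and combined with the equivariant structure defining $E$, its divisor on $E_{\Sigma_F}$ is $\sum_i\langle e_i,\lambda\rangle \mathcal D_i$ twisted by the pullback of the divisor class of $\cL_\lambda$. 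Passing to cohomology yields $\sum_i\langle e_i,\lambda\rangle x_i = c_{top}(\lambda)$, which is exactly the stated linear relation.

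Finally I would show these relations generate all of $\ker\psi$. Write $R$ for the quotient of $H^*(B,\R)[x_1,\ldots,x_r]$ by the ideal above; then $\psi$ factors through a surjection $\bar\psi\colon R\to H^*(E_{\Sigma_F},\R)$. Reducing modulo the augmentation ideal $H^{>0}(B,\R)$ kills every $c_{top}(\lambda)$, so $R/H^{>0}(B,\R)\cdot R$ is the untwisted quotient $\R[x_1,\ldots,x_r]/(\mathrm{SR}+\langle\sum_i\langle e_i,\lambda\rangle x_i\rangle)\cong H^*(X_{\Sigma_F},\R)$. Lifting a monomial basis $b_1,\ldots,b_N$ of this to $R$ and invoking graded Nakayama (as $H^*(B,\R)$ is non-negatively graded with $H^0=\R$) shows the $b_j$ generate $R$ over $H^*(B,\R)$, giving a surjection $\alpha\colon H^*(B,\R)^N\to R$. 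The images $\bar\psi(b_j)$ restrict to a basis of fiber cohomology, so by Leray--Hirsch they form a free $H^*(B,\R)$-basis of $H^*(E_{\Sigma_F},\R)$; hence $\bar\psi\circ\alpha$ is an isomorphism. Since $\alpha$ is surjective and $\bar\psi\circ\alpha$ is injective, $\alpha$ is an isomorphism, and therefore so is $\bar\psi$.

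The main obstacle I expect is the linear-relation computation: pinning down that $\chi^\lambda$ really is a rational section of $\pi^*\cL_\lambda$ with the asserted divisor, and tracking the sign so that the Chern class appears as $+c_{top}(\lambda)$ rather than its negative, requires carefully matching the equivariant structure on $E$ (the homomorphism $c\colon M_F\to\Pic(B)$) with the toric divisor relations on the fiber. Everything else is formal once Leray--Hirsch is in place.
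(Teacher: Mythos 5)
Your proof is correct, but it is worth noting that the paper itself does not prove this statement at all: Theorem~\ref{thm:US} is quoted from \cite{US03}, and the only argument the paper supplies is for its toric-base specialization, Theorem~\ref{toricUS}, where everything is done by an elementary Stanley--Reisner computation --- writing down the presentation of $H^*(X_\Sigma,\R)$ for the twisted product fan $\Sigma=\Sigma_B\ltimes_\Phi\Sigma_F$ and splitting the ideals $I_\Sigma$ and $J_\Sigma$ into a base part and a fiber part. Your route is genuinely different and strictly more general: the Leray--Hirsch argument (classes $x_i=[E\times_{T_F}D_i]$ restricting to the Jurkiewicz--Danilov generators of the fiber), the support argument for the Stanley--Reisner monomials, the interpretation of $\chi^\lambda$ as a rational section of $p^*\cL_{\pm\lambda}$ for the linear relations, and the graded-Nakayama count showing the surjection $\bar\psi\colon R\to H^*(E_{\Sigma_F},\R)$ is an isomorphism all make sense for an arbitrary smooth complete base $B$, which the paper's twisted-fan technique cannot reach (it needs $B$ toric). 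Conversely, the paper's computation buys something your argument does not: it identifies the twisting datum $\Phi$ explicitly inside the relations via $\bar c(-\lambda)=\sum_j\langle\Phi(f_j),\lambda\rangle y_j$, which is what makes the combinatorics of $\Phi$ visible. Two small points to tidy up in your write-up: the sign ambiguity you flag is real but purely conventional --- the paper's convention ($c(k)=-k$ for the tautological bundle in its Example, and relation \eqref{eq:phi}) is exactly what makes the relation come out as $c_{top}(\lambda)-\sum_i\langle e_i,\lambda\rangle x_i$; and when invoking graded Nakayama you should note that $R$ is non-negatively graded (bounded below), which is what licenses the induction-on-degree argument without any finiteness hypothesis on $R$ as an $H^*(B,\R)$-module.
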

Theorem~\ref{thm:US} is a generalization of the Stanley-Reisner description of the cohomology ring of toric varieties. Alternative descriptions of the cohomology ring of toric variety bundles were obtained in \cite{roch,hofscheier2023cohomology,khovanskii2021gorenstein}. Let us now formulate a version of Theorem~\ref{thm:US} for fibered toric varieties. For this let us further denote by $\tau_1,\ldots,\tau_s$ the rays of $\Sigma_B$ with primitive generators $f_1,\ldots, f_s$.

\begin{theorem}\label{toricUS}
Let $\Sigma_B, \Sigma_F$ be as before and let $\Sigma= \Sigma_B\ltimes_\Phi \Sigma_F$ be their twisted product for some twisting data $\Phi\colon \Sigma_B\to N_F$. Then the cohomology ring $H^*(X_{\Sigma},\R)$ of the corresponding fibered toric variety is isomorphic (as an $H^*(X_{\Sigma_B}, \R)$-algebra) to the quotient of $H^*(X_{\Sigma_B},\R)[x_1, \ldots, x_r]$ by
  \[
      \left\langle x_{j_1} \cdots x_{j_k} \colon \rho_{j_1}, \ldots, \rho_{j_k} \; \text{do not span a cone of} \; \Sigma \right\rangle + \left\langle c_{top}(\lambda) - \sum_{i=1}^r \langle e_i, \lambda \rangle x_i \colon \lambda \in M_F \right\rangle \text{.}
  \]
\end{theorem}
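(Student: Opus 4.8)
The plan is to reduce Theorem~\ref{toricUS} to the Sankaran--Uma description of Theorem~\ref{thm:US} by realizing the fibered toric variety $X_\Sigma$ as a toric variety bundle over $X_{\Sigma_B}$. First I would invoke Theorem~\ref{thm:torbuntor} together with the discussion surrounding \eqref{eq:phi}: the twisting data $\Phi\colon\Sigma_B\to N_F$ is precisely the datum $\Phi_E$ attached to a $T_F$-principal bundle $E\to X_{\Sigma_B}$, determined up to a global homomorphism $l^\vee\in\Hom(N_B,N_F)$ and hence, by Proposition~\ref{prop:iso}, up to isomorphism of fibered toric varieties. For this $E$ the associated bundle $E_{\Sigma_F}$ carries a $T=T_B\times T_F$-toric structure with fan $\Sigma_B\ltimes_\Phi\Sigma_F=\Sigma$, so $X_\Sigma\cong E_{\Sigma_F}$ as $T$-toric varieties over $X_{\Sigma_B}$. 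It therefore suffices to present $H^*(E_{\Sigma_F},\R)$ as an $H^*(X_{\Sigma_B},\R)$-algebra.

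Next I would apply Theorem~\ref{thm:US} with $B=X_{\Sigma_B}$ and fiber fan $\Sigma_F$; the standing smoothness and projectivity assumptions guarantee that it is applicable. This exhibits $H^*(E_{\Sigma_F},\R)$ as the quotient of $H^*(X_{\Sigma_B},\R)[x_1,\dots,x_r]$ by the Stanley--Reisner ideal of the fiber fan $\Sigma_F$ together with the linear relations $c_{top}(\lambda)-\sum_{i=1}^r\langle e_i,\lambda\rangle x_i$ for $\lambda\in M_F$, where $c_{top}(\lambda)=c_1(\cL_\lambda)$ and $\cL_\lambda$ is the line bundle associated to $E$ by $c\colon M_F\to\Pic(X_{\Sigma_B})$. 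The linear part coincides verbatim with the relations in the statement, so the remaining task is only to match the two descriptions of the monomial ideal: the one governed by $\Sigma_F$, coming from Theorem~\ref{thm:US}, and the one governed by $\Sigma$, appearing in the statement.

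To reconcile them I would examine how the fiber rays sit inside the twisted product. Taking $\sigma=\{0\}$ in Definition~\ref{def:twistedprod} identifies each ray $\rho_i$ of $\Sigma_F$ with the ray $\{0\}\times\rho_i$ of $\Sigma$, while a primitive graph generator $(f_j,\Phi(f_j))$ of a base ray has nonzero $N_B$-component and hence never lies in $\{0\}\times N_F$. Since every cone of $\Sigma$ has the form $\widetilde\sigma+\upsilon$ with $\sigma\in\Sigma_B$ and $\upsilon\in\Sigma_F$, whenever a subset of fiber rays $\rho_{j_1},\dots,\rho_{j_k}$ is contained in the ray set of such a cone it must already be contained in the fiber part, i.e.\ in the rays of $\upsilon$. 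By smoothness the fans are simplicial, so ``spanning a cone'' means ``forming a face''; thus $\rho_{j_1},\dots,\rho_{j_k}$ span a cone of $\Sigma$ if and only if they span a cone of $\Sigma_F$, and $x_{j_1}\cdots x_{j_k}$ lies in the monomial ideal read off from $\Sigma$ exactly when it lies in the Stanley--Reisner ideal of $\Sigma_F$. The two presentations therefore agree, completing the argument.

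I expect no conceptual obstacle here: the substance is concentrated in Theorem~\ref{thm:torbuntor}, already established, and the remainder is bookkeeping. The one point deserving care is that the homomorphism $c_{top}$ furnished abstractly by the bundle $E$ is the same one determined combinatorially by $\Phi$ through \eqref{eq:phi}. This follows from the isomorphism $\Pic(X_{\Sigma_B})\simeq\Pm_{\Sigma_B}/M$ recalled before \eqref{eq:phi}, under which $c_1(\cL_\lambda)$ is represented by the piecewise linear function $\bar c(\lambda)$, together with Proposition~\ref{prop:iso}, which ensures that the ambiguity in the lift $\bar c$ does not affect the resulting algebra.
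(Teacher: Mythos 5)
Your proof is correct, but it takes a genuinely different route from the paper's. You reduce the statement to the general Sankaran--Uma theorem (Theorem~\ref{thm:US}) by running the bundle--fan correspondence backwards: from the twisting data $\Phi$ you reconstruct, via \eqref{eq:phi} and Proposition~\ref{prop:toricbun}, a $T_F$-principal bundle $E\to X_{\Sigma_B}$ with $E_{\Sigma_F}\cong X_\Sigma$ as fibered toric varieties over $X_{\Sigma_B}$ (Theorem~\ref{thm:torbuntor} and Proposition~\ref{prop:iso}), after which only bookkeeping remains: checking that fiber rays span a cone of $\Sigma$ iff they span a cone of $\Sigma_F$, and that the bundle-theoretic $c_{top}$ agrees with the one determined combinatorially by $\Phi$ --- both of which you handle soundly. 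The paper never invokes Theorem~\ref{thm:US}: it starts from the classical Stanley--Reisner presentation of $H^*(X_\Sigma,\R)$ in variables $x_i,y_j$ attached to \emph{all} rays of the twisted product, splits the monomial ideal as $I_\Sigma=I_{\Sigma_B}+I_{\Sigma_F}$ (using that $\Sigma$ is combinatorially a product) and the linear ideal as $J_\Sigma=J_{\Sigma_B}+\widetilde J_{\Sigma_F}$ (splitting $M=M_B\times M_F$), recognizes $\R[y_1,\ldots,y_s]/(I_{\Sigma_B}+J_{\Sigma_B})\cong H^*(X_{\Sigma_B},\R)$, and finally identifies $\widetilde J_{\Sigma_F}$ with the $c_{top}$-relations via the fact that $\lambda\circ\Phi$ represents the line bundle $\cL_{-\lambda}$. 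What your route buys is brevity: given Theorem~\ref{thm:US} as a black box, the result follows from the dictionary between twisted fans and principal bundles. What it loses is the point of the exercise in this paper: Theorem~\ref{toricUS} is presented as an independent, purely toric illustration of the Sankaran--Uma formula in the fibered case, so deriving it as a corollary of that very formula --- while logically valid, since \cite{US03} is an external result --- forfeits the self-contained consistency check that the paper's Stanley--Reisner argument provides.
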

\begin{proof}
Our proof will be based on a classical Stanley-Reisner description of cohomology ring of toric variety. By the construction of the twisted product, $\Sigma=\Sigma_B\ltimes_\Phi \Sigma_F$ has rays $\rho_1,\ldots,\rho_r, \widetilde\tau_1,\ldots,\widetilde\tau_s$, where $\rho_i$'s are generated by $(0,e_i)\in N_B\times N_F=N$ and $\widetilde\tau_j$'s are generated by $\widetilde f_j=(f_j, \Phi(f_j))\in N$. In particular, we have:
\[
H^*(X_{\Sigma}, \R) \simeq \R[x_1,\ldots,x_r,y_1,\ldots,y_s]/ (I_{\Sigma}+J_{\Sigma}),
\]
where
\[
I_{\Sigma}=\langle x_{i_1}\ldots x_{i_l}y_{j_1}\ldots y_{j_m} \,|\, \rho_{i_1},\ldots , \rho_{i_l},\widetilde\tau_{j_1},\ldots,\widetilde\tau_{j_m} \text{ do not span a cone in } \Sigma \rangle 
\]
and 
\[
J_\Sigma = \left\langle \sum_{i=1}^r\langle (0,e_i), \lambda  \rangle x_i +\sum_{j=1}^s\langle \widetilde f_j, \lambda  \rangle y_j \:\big|\: \lambda \in M=M_B\times M_F \right\rangle.
\]

Now notice, that since the twisted product $\Sigma=\Sigma_B\ltimes_\Phi \Sigma_F$  is combinatorially equivalent to a direct product of fans, the collection of rays $\rho_{i_1},\ldots , \rho_{i_l},\widetilde\tau_{j_1},\ldots,\widetilde\tau_{j_m}$  do not span a cone in  $\Sigma$ if and only if either $\rho_{i_1},\ldots , \rho_{i_l}$ do not span a cone in $\Sigma_F$ or $\tau_{j_1},\ldots,\tau_{j_m}$ do not span a cone in $\Sigma_B$. Therefore, the ideal $I_\Sigma$ can be written as a sum $I_\Sigma = I_{\Sigma_B} + I_{\Sigma_F}$, where 
\[
I_{\Sigma_F} = \langle x_{i_1}\ldots x_{i_l} \,|\, \rho_{i_1},\ldots , \rho_{i_l} \text{ do not span a cone of } \Sigma_F \rangle,
\]
\[
I_{\Sigma_B} = \langle y_{j_1}\ldots y_{j_m} \,|\, \tau_{j_1},\ldots,\tau_{j_m} \text{ do not span a cone of } \Sigma_B \rangle.
\]

Similarly we can write the ideal $J_\Sigma$ as a sum of two ideals in the following way. If $\lambda\in M_B\subset M$, then we get $\langle (0,e_i),\lambda\rangle=0$ for any $i=1,\ldots,r$ and $\langle \widetilde f_j,\lambda\rangle=\langle  f_j,\lambda\rangle$ for every $j=1,\ldots,s$. On the other hand, if $\lambda\in M_F\subset M$, we get $\langle (0,e_i),\lambda\rangle= \langle e_i,\lambda\rangle$ and
$\langle \widetilde f_j,\lambda\rangle = \langle\Phi(f_j),\lambda\rangle$ for $j=1,\ldots,s$. Therefore,  $J_\Sigma = J_{\Sigma_B}+\widetilde J_{\Sigma_F}$, where
\[
J_{\Sigma_B} = \left\langle \sum_{j=1}^s \langle f_j, \lambda \rangle y_j \colon \lambda \in M_B\right\rangle, \quad \widetilde J_{\Sigma_F} =\left\langle \sum_{j=1}^s \langle \Phi(f_j),\lambda\rangle y_j + \sum_{i=1}^r \langle e_i, \lambda \rangle x_i \colon \lambda \in M_F \right\rangle.
\]

Notice that the generators of ideals $I_{\Sigma_B}$ and $J_{\Sigma_B}$ are contained in $\R[y_1,\ldots,y_s]$ and that the corresponding quotient ring $\R[y_1,\ldots,y_s]/(I_{\Sigma_B}+J_{\Sigma_B})$ is isomorphic to $H^*(X_{\Sigma_B})$.
Thus, we obtain the following presentation for the cohomology ring $H^*(X_\Sigma,\R)$:
\[
H^*(X_\Sigma, \R) \simeq \R[x_1,\ldots,x_r,y_1,\ldots,y_s]/ (I_{\Sigma_B}+J_{\Sigma_B})+(I_{\Sigma_F}+\widetilde J_{\Sigma_F}) =
H^*(X_{\Sigma_B},\R)[x_1, \ldots, x_r]/(I_{\Sigma_F}+\widetilde J_{\Sigma_F}). 
\]
The theorem follows from the fact that follows from the fact that the piecewise linear map $\lambda\circ\Phi$ on $\Sigma_B$ represents the line bundle $\cL_{-\lambda}$ and therefore
\[
\bar{c}(-\lambda) =\sum_{j=1}^s \langle \Phi(f_j),\lambda\rangle y_j \text{ viewed as an element of } H^2(X_{\Sigma_B},\R).
\]
This implies that $\widetilde J_{\Sigma_F}$ is given by
\[
\widetilde J_{\Sigma_F} =\left\langle c_{top}\left( \lambda \right) - \sum_{i=1}^n \langle e_i, \lambda \rangle x_i \colon \lambda \in M \right\rangle, 
\]
which finishes the proof.
\end{proof}

\subsection{BKK theorem for fibered toric varieties}
Let us now formulate a version of the Bernstein-Kushnirenko-Khovanskii theorem (BKK theorem) \cite{Kouchnirenko,Bernstein, BKK} for toric variety bundles and its version for fibered toric varieties. We start with a reminder of the classical BKK theorem.

Let $\Sigma$ be a smooth complete fan and $X_\Sigma$ the corresponding toric variety. Let $\rho_1,\ldots,\rho_r$ be rays of $\Sigma$ with primitive generators $e_1,\ldots,e_r$ and $D_1,\ldots, D_r$ be the corresponding $T$-invariant divisors. Every ample divisor $D_h= \sum_{i=1}^r h_i D_i$ on $X_\Sigma$ corresponds to a convex polytope
\[
\Delta_h =\{\lambda\big| \langle \lambda,e_i\rangle \leq h_i \}\subset M_\R.
\]
One can extend this correspondence to any divisor $D_h= \sum_{i=1}^r h_i D_i$, however, the corresponding geometric object will be a \emph{virtual polytope} $\Delta_h$. Virtual polytopes were introduced in \cite{PK92,KP}, in particular, it was shown that they have a well defined notion of volume as well and one can integrate smooth functions over virtual polytopes.

\begin{theorem}\label{thm:BKK}
    Let $X_\Sigma$ be a smooth complete toric variety of dimension $n$, and let $D_1,\ldots, D_n$ be the divisors corresponding to (possibly virtual) polytopes $\Delta_1,\ldots,\Delta_n$. Then the intersection number $D_1\cdot\ldots\cdot D_n$ can be computed as
    \[
    D_1\cdot\ldots\cdot D_n = n!\cdot\Vol(\Delta_1,\ldots,\Delta_n),
    \]
    where $\Vol$ denotes the mixed volume of the polytopes normalized with respect to $M$.
\end{theorem}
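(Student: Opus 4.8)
The plan is to exploit the fact that both sides of the claimed equality are symmetric and additive in their respective arguments, and then reduce the whole statement to a single diagonal computation. On the algebro-geometric side the intersection pairing $(D_1,\ldots,D_n)\mapsto D_1\cdots D_n$ is symmetric and additive in each slot. On the combinatorial side the mixed volume $\Vol(\Delta_1,\ldots,\Delta_n)$ is symmetric and additive with respect to Minkowski sum in each slot. Since the correspondence $D_h\mapsto\Delta_h$ sends a sum of divisors to the Minkowski sum of the associated (virtual) polytopes — because support functions add — both $D_1\cdots D_n$ and $n!\,\Vol(\Delta_1,\ldots,\Delta_n)$ are symmetric multilinear functions of the same data. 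By the polarization identity a symmetric multilinear form is determined by its values on the diagonal, so it suffices to prove the identity when $D_1=\cdots=D_n=D_h$ for an ample divisor, that is to establish $D_h^{\,n}=n!\,\Vol(\Delta_h,\ldots,\Delta_h)$, the right-hand side being the ordinary volume of $\Delta_h$ normalized with respect to $M$.

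First I would treat this diagonal case by asymptotic Riemann--Roch. For the ample line bundle $\Om(D_h)$ on the smooth complete variety $X_\Sigma$, Serre vanishing gives $\dim H^0(X_\Sigma,\Om(kD_h))=\chi(X_\Sigma,\Om(kD_h))$ for $k\gg 0$, and the Hilbert polynomial has leading term $(D_h^{\,n}/n!)\,k^n$; hence $D_h^{\,n}=\lim_{k\to\infty}(n!/k^n)\dim H^0(X_\Sigma,\Om(kD_h))$. The toric dictionary identifies $\dim H^0(X_\Sigma,\Om(kD_h))$ with the number of lattice points $\#(k\Delta_h\cap M)$, and the elementary Riemann-sum estimate $\#(k\Delta_h\cap M)/k^n\to\Vol(\Delta_h,\ldots,\Delta_h)$ as $k\to\infty$ then yields $D_h^{\,n}=n!\,\Vol(\Delta_h,\ldots,\Delta_h)$, completing the diagonal case.

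It remains to pass from ample (genuine) polytopes to arbitrary virtual ones. In the projective setting the convex piecewise linear support functions span the full space of piecewise linear functions, so every virtual polytope $\Delta_h$ is a formal Minkowski difference of genuine polytopes; equivalently, every divisor class is a difference of two nef classes. By \cite{PK92,KP} the volume, and hence the mixed volume, extends to virtual polytopes as a polynomial compatible with these differences, and remains symmetric and additive in each argument. Invoking additivity in each slot then propagates the diagonal identity from genuine polytopes to the whole virtual setting, matching the intersection number with the mixed volume and finishing the proof.

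The step I expect to be the main obstacle is the clean handling of virtual polytopes: one must verify that the extension of $\Vol$ to virtual polytopes from \cite{PK92,KP} stays symmetric and additive, so that the polarization argument applies verbatim, and that formal differences of nef classes on the geometric side correspond exactly to Minkowski differences on the combinatorial side. The diagonal computation is entirely standard; the conceptual care lies in ensuring that the two multilinear extensions agree on the full (virtual) domain, rather than merely on the cone of ample divisors where the lattice-point count literally computes $\dim H^0$.
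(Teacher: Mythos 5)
The paper itself offers no proof of Theorem~\ref{thm:BKK}: it is recalled as the classical Bernstein--Kushnirenko--Khovanskii theorem, cited from \cite{Kouchnirenko,Bernstein,BKK}, with the virtual-polytope formalism delegated to \cite{PK92,KP}. So your proposal must be judged on its own merits. Its skeleton --- (i) both sides are symmetric and multilinear (intersection numbers in divisor classes, mixed volume with respect to Minkowski sum), (ii) the divisor-to-polytope correspondence is additive because support functions add, (iii) the diagonal case $D_h^{\,n}=n!\cdot\Vol(\Delta_h,\ldots,\Delta_h)$ for ample $D_h$ via $h^0(kD_h)=\#(k\Delta_h\cap M)$, vanishing of higher cohomology, asymptotic Riemann--Roch, and lattice-point asymptotics, (iv) extension to virtual polytopes through the Pukhlikov--Khovanskii multilinear extension of volume --- is the standard and essentially correct route \emph{in the projective case}. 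One ordering slip: polarization by itself reduces the statement to the diagonal over \emph{all} divisors, not the ample diagonal; you need step (iv) first (write each divisor as a difference of amples and expand by multilinearity) to reduce arbitrary tuples to ample tuples, and only then does polarization within the ample cone (which is closed under addition) reduce those to the ample diagonal of step (iii). Your ingredients suffice, but as written the reduction runs backwards.

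The genuine gap is the hypothesis: the theorem assumes $X_\Sigma$ smooth and \emph{complete}, not projective, and smooth complete toric varieties need not be projective (classical three-dimensional examples exist). On such a variety there is no ample divisor at all, and the nef cone can even be trivial, so both your diagonal computation and your claim that ``convex piecewise linear support functions span the full space of piecewise linear functions'' fail; the whole bootstrap from the ample cone has nothing to start from. The fix is standard but must be stated: choose a smooth projective refinement $\Sigma'$ of $\Sigma$ (toric Chow lemma followed by toric resolution of singularities); the induced proper birational toric morphism $\pi\colon X_{\Sigma'}\to X_\Sigma$ satisfies $\pi^*D_1\cdots\pi^*D_n=D_1\cdots D_n$ by the projection formula, while each divisor $D_i$ and its pullback have the same support function on $N_\R$, hence the same virtual polytope $\Delta_i$. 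This reduces the complete case to the projective case, where your argument goes through.
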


In~\cite{hofscheier2023cohomology} Theorem~\ref{thm:BKK} was generalized to the case of toric variety bundles in the following way. 
Let $B$ be a smooth complete variety of dimension $k$ and $E\to B$ be a $T_F$-principal bundle. Let further $\Sigma_F$ be a smooth complete fan and $E_{\Sigma_F}$ the corresponding toric variety bundle. Let us denote by $\rho_1,\ldots,\rho_r$ the set of rays of $\Sigma_F$.
Similar to the classical toric case, for every ray $\rho_i$ of $\Sigma_F$, toric variety bundle $E_{\Sigma_F}$ has a corresponding $T_F$-invariant divisor $\widetilde D_i\coloneqq E\times_{T_F}D_\rho$.
As in the classical toric case, we can encode a linear combination of $T_F$-invariant divisors $\widetilde D_h=\sum_{i=1}^rh_i\widetilde D_i$ by a (possibly virtual) polytope $\Delta_h$ in $(M_F)_\R$. Now we are ready to formulate a version of BKK theorem for toric variety bundles which computes the intersection indeses of divisors on $E_{\Sigma_F}$.

\begin{theorem}[\cite{hofscheier2023cohomology}]\label{thm:BKKdeg2}
    Let $p\colon E_{\Sigma_F}\to B$ be as before. Let further $\widetilde D_h=\sum h_i \widetilde D_i$ be a linear combination of $T_F$-invariant divisors and $D_B$ be a divisor on $B$. Then one can compute the self intersection index of $p^*D_B+ \widetilde D_h$ in the following way
    \[
        k!\cdot (p^*D_B+\widetilde D_h)^{n+k} = (n+k)!\cdot\int_{\Delta_h} (D_B+c(\lambda))^k \diff \mu(\lambda),
    \]
    where $\mu(\lambda)$ is the Lebesgue measure on $(M_F)_\R$ normalized with respect to $M_F$.
\end{theorem}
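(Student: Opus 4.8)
The plan is to reduce the statement to a relative version of the classical BKK theorem that computes the fibre integral $p_*$, and then to recover the full formula by a single exponential (generating-function) manipulation. Write $n=\dim X_{\Sigma_F}$, so that $\dim E_{\Sigma_F}=n+k$, and let $p_*\colon H^*(E_{\Sigma_F},\R)\to H^{*-2n}(B,\R)$ denote integration over the fibre (well defined since $E_{\Sigma_F}$ is a smooth complete oriented fibre bundle over $B$). Since $\int_{E_{\Sigma_F}}\alpha=\int_B p_*\alpha$ and $p_*(p^*\beta\cdot\alpha)=\beta\cdot p_*\alpha$ by the projection formula, everything is controlled by the behaviour of $p_*$ on powers of $\widetilde D_h$. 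The heart of the argument is therefore the following \emph{relative BKK formula}, which I would isolate as a lemma:
\[
p_*\bigl(e^{\widetilde D_h}\bigr)=\int_{\Delta_h}e^{c(\lambda)}\,\diff\mu(\lambda)\qquad\text{in }H^*(B,\R),
\]
read off degree by degree; extracting the component in $H^{2i}(B,\R)$ gives the equivalent family of identities $p_*(\widetilde D_h^{\,n+i})=\tfrac{(n+i)!}{i!}\int_{\Delta_h}c(\lambda)^i\,\diff\mu(\lambda)$, the case $i=0$ being exactly the classical BKK theorem (Theorem~\ref{thm:BKK}) applied to a single fibre.

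Granting the lemma, the theorem follows in one computation. Using multiplicativity of the exponential, the projection formula, and $\int_{E_{\Sigma_F}}=\int_B\circ\,p_*$, I would write
\[
\int_{E_{\Sigma_F}}e^{\,p^*D_B+\widetilde D_h}=\int_B e^{D_B}\,p_*\bigl(e^{\widetilde D_h}\bigr)=\int_B\int_{\Delta_h}e^{\,D_B+c(\lambda)}\,\diff\mu(\lambda).
\]
The degree-$2(n+k)$ part of the left-hand side contributes $\tfrac{1}{(n+k)!}(p^*D_B+\widetilde D_h)^{n+k}$, while on the right the top intersection on $B$ selects $\tfrac{1}{k!}(D_B+c(\lambda))^k$ inside the integral; comparing these two numbers yields precisely $k!\,(p^*D_B+\widetilde D_h)^{n+k}=(n+k)!\int_{\Delta_h}(D_B+c(\lambda))^k\,\diff\mu(\lambda)$. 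Equivalently one may avoid exponentials, expand $(p^*D_B+\widetilde D_h)^{n+k}$ by the binomial theorem, push each term forward, and recombine using $\binom{n+k}{k-i}\tfrac{(n+i)!}{i!}=\tfrac{(n+k)!}{k!}\binom{k}{i}$ together with the binomial theorem for $(D_B+c(\lambda))^k$.

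The real obstacle is the relative BKK lemma. I would first establish it in the model case where the base is itself toric, $B=X_{\Sigma_B}$: by Theorem~\ref{thm:torbuntor} the total space is then the toric variety $X_\Sigma$ with $\Sigma=\Sigma_B\ltimes_\Phi\Sigma_F$, so $p^*D_B+\widetilde D_h$ corresponds to a (virtual) polytope $P\subset M_\R=(M_B)_\R\times(M_F)_\R$ and Theorem~\ref{thm:BKK} gives $(p^*D_B+\widetilde D_h)^{n+k}=(n+k)!\Vol_M(P)$. The defining inequalities split: the rays $(0,e_i)$ cut out the slice condition $\lambda\in\Delta_h$ independently of the $(M_B)_\R$-coordinate, while the rays $\widetilde f_j=(f_j,\Phi(f_j))$ cut out, over a fixed $\lambda\in\Delta_h$, exactly the polytope of a divisor of class $D_B+c(\lambda)$ on $X_{\Sigma_B}$ (this is where the identification $\sum_j\langle\Phi(f_j),\lambda\rangle y_j=\bar c(-\lambda)$ from Section~2 enters). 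Fubini for the product normalization $\Vol_M=\Vol_{M_B}\times\Vol_{M_F}$, followed by classical BKK on $X_{\Sigma_B}$, then gives the fibre integral directly. For a general smooth complete base I would deduce the lemma from this model case: by the Leray--Hirsch/Sankaran--Uma description (Theorem~\ref{thm:US}) the map $p_*$ is $H^*(B,\R)$-linear and the entire intersection-theoretic content of $E_{\Sigma_F}$ is a universal expression in $\Sigma_F$, the classes $c(\lambda)\in H^2(B,\R)$, and the intersection form of $B$. The delicate point — and the step I expect to require the most care — is to make this reduction to the toric base rigorous, either by a splitting-principle argument that realizes the relevant Chern data on toric models, or by proving the $H^*(B,\R)$-linear pushforward formula intrinsically from the presentation of $H^*(E_{\Sigma_F},\R)$ and the relations $\sum_i\langle e_i,\lambda\rangle x_i=c(\lambda)$.
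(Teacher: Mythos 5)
First, a point of comparison: this paper does not actually prove Theorem~\ref{thm:BKKdeg2} --- it imports it from \cite{hofscheier2023cohomology}. The only proof the paper contains is of its specialization to fibered toric varieties, Theorem~\ref{toricBKK}, and your ``model case'' is precisely that proof: classical BKK (Theorem~\ref{thm:BKK}) applied to the total space $X_\Sigma$, the observation that the (virtual) polytope of $p^*D_B+\widetilde D_h$ fibers over $\Delta_h$ with fiber over $\lambda$ the polytope of a divisor of class $D_B+c(\lambda)$ on $X_{\Sigma_B}$, then Fubini and BKK on the base. Your formal superstructure on top of this is also correct: the relative pushforward identity $p_*\bigl(\widetilde D_h^{\,n+i}\bigr)=\tfrac{(n+i)!}{i!}\int_{\Delta_h}c(\lambda)^i\,\diff\mu(\lambda)$ does imply the theorem, since $\binom{n+k}{k-i}\tfrac{(n+i)!}{i!}=\tfrac{(n+k)!}{k!}\binom{k}{i}$, and its $i=0$ case is indeed classical BKK on a single fiber.

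However, as a proof of the statement as given --- arbitrary smooth complete base $B$ --- your argument has a genuine gap exactly where you flag it: the passage from the toric model to general $B$ is named but never performed, and it is not a formality. Theorem~\ref{thm:US} gives the $H^*(B,\R)$-algebra structure of $H^*(E_{\Sigma_F},\R)$, but it does not by itself give $p_*$: for $i>0$ the class $p_*\bigl(\widetilde D_h^{\,n+i}\bigr)$ lies in $H^{2i}(B,\R)$ and depends nontrivially on the twisting classes $c(\lambda)$, so one must (a) exhibit an $H^*(B,\R)$-module basis of monomials in the $x_i$ (Leray--Hirsch/equivariant formality), (b) compute $p_*$ on that basis, and (c) show that the resulting expression for $p_*\bigl(\widetilde D_h^{\,n+i}\bigr)$ is a universal polynomial in the classes $c(\lambda)$, which it then suffices to verify on toric bases. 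The clean way to carry out (c) is to work universally over the classifying space of $T_F$, i.e.\ in $T_F$-equivariant cohomology of $X_{\Sigma_F}$, check the identity on the toric approximations $BT_m=(\p^m)^n$ of Section~\ref{sec:eq} using your model case (these bases are toric and carry arbitrary twisting data, so Theorem~\ref{toricBKK} applies), and then pull back along the classifying map of $E$, using that integration over the fiber commutes with base change. Your two suggested routes (``splitting-principle argument'', ``intrinsically from the presentation'') are labels for this missing argument rather than the argument itself; it is precisely the content supplied by \cite{hofscheier2023cohomology}, and the reason the present paper states the theorem with a citation while only proving the toric-base case.
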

Let us mention that every divisor on $E_{\Sigma_F}$ is linearly equivalent to a divisor of the form $p^*D_B+\widetilde D_h$, thus Theorem~\ref{thm:BKKdeg2} completely describes the intersection theory of divisors on $E_{\Sigma_F}$.

Now let us specialize to the case of fibered toric varieties. Let $\Sigma_B, \Sigma_F$ be smooth complete fans and let $\Sigma= \Sigma_B\ltimes_\Phi \Sigma_F$ be their twisted product for some twisting data $\Phi\colon \Sigma_B\to N_F$. Let as before $\rho_1,\ldots,\rho_r$ dentote rays of $\Sigma_F$ generated by $e_1,\ldots,e_r$ and let us further denote by $\tau_1,\ldots,\tau_s$ be rays of $\Sigma_B$ with primitive generators $f_1,\ldots, f_s$.

Recall that one can recover the map $\bar c\colon M_F\to \Pm_{\Sigma_F}$ using condition~\eqref{eq:phi}:
\[
 \bar{c}(\lambda)(x)= \langle\Phi_E(x), \lambda\rangle \text{ for any } x\in N_B.
\]
In other words, the line bundle $\cL_\lambda$ on $X_{\Sigma_B}$ can be represented by a divisor:
\[
\cL_\lambda = \Om_{X_{\Sigma_B}} (D_\lambda),\quad D_\lambda= \sum_{j=1}^s \lambda\circ \Phi(f_j) D_{\rho_j}.
\]
Let us denote by $\Delta_\lambda\subset (M_B)_\R$ the (virtual) polytope corresponding to the divisor $D_\lambda$ of 
$X_{\Sigma_B}$. 

Finally, let us describe the (virtual) polytope in $M_\R$ which corresponds to a divisor $\widetilde D_h = \sum_{i=1}^r h_i\widetilde D_i$ on the fibered toric variety $X_\Sigma$.
The assignment $\lambda \mapsto \Delta_\lambda$ defines a linear family of (virtual) polytopes on $M_F$, that is $\Delta_{\lambda+\mu}=\Delta_\lambda+\Delta_\mu$. Using this family, one can define a lift of every (virtual) polytope $\Delta\subset (M_F)_\R$ to a (virtual) polytope $\widetilde \Delta$ in $M_R=(M_F)_\R\times (M_B)_\R$. If $\Delta$ is a convex polytope such that $\Delta_\lambda$ is convex for every $\lambda\in \Delta$ the lift $\widetilde \Delta$ is defined as
\[
\widetilde \Delta\coloneqq \{(\lambda,x) \in M_R=(M_F)_\R\times (M_B)_\R\:|\: \lambda\in \Delta, x\in \Delta_\lambda\} \text{.}
\]
By the construction of the lift, we get that a (virtual) polytope representing $\widetilde D_h$ is the lift $\widetilde \Delta_h$ of the polytope representing the divisor $D_h$ on the toric variety $X_{\Sigma_F}$.
For more details on the extension of the lift to the setting of virtual polytopes see \cite[Section 10.3]{hofscheier2023cohomology}.

Now we are ready to state and prove a version of Theorem~\ref{thm:BKKdeg2} for a fibered toric variety. 

\begin{theorem}\label{toricBKK}
Let $\Sigma_B, \Sigma_F$ be as before and let $\Sigma= \Sigma_B\ltimes_\Phi \Sigma_F$ be their twisted product for some twisting data $\Phi\colon \Sigma_B\to N_F$. Let further $X_\Sigma$ be the corresponding fibered toric variety, $\widetilde D_h=\sum_{i=1}^r h_i \widetilde D_{\rho_i}$ some $T_F$-invariant divisor on $X_\Sigma$ and $D_B$ some divisor on $X_{\Sigma_B}$. Then the self-intersection index of $(p^*D_B+\widetilde  D_h)$ can be computed as
\[k!\cdot (p^*D_B+\widetilde D_h)^{n+k} = (n+k)!\cdot\int_{\Delta_h} (D_B+c(\lambda))^k \diff \mu(\lambda),
\]
 where $\mu(\lambda)$ is the Lebesgue measure on $(M_F)_\R$ normalized with respect to $M_F$.
 \end{theorem}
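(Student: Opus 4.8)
The plan is to prove the identity by applying the classical \BKK theorem (Theorem~\ref{thm:BKK}) directly to the smooth projective toric variety $X_\Sigma$, and then to reorganize the resulting volume computation using the product structure $M=M_B\times M_F$ together with the fibered description of the relevant polytope recalled before the statement. One may alternatively observe that, by Theorem~\ref{thm:torbuntor}, $X_\Sigma=E_{\Sigma_F}$ is a toric variety bundle over the smooth complete base $X_{\Sigma_B}$, so that the assertion is literally the special case $B=X_{\Sigma_B}$ of Theorem~\ref{thm:BKKdeg2}; the argument below is a self-contained toric reproof that makes the geometry explicit and thereby illustrates the general result.

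First I would identify the (virtual) polytope $P\subset M_\R$ attached to the divisor $p^*D_B+\widetilde D_h$. Write $D_B=\sum_{j=1}^s b_j D_{\tau_j}$ with associated polytope $\Delta_B$. Since $p$ is the toric morphism induced by the lattice projection $\bar\phi\colon N=N_B\times N_F\to N_B$, which sends the fiber generators $(0,e_i)$ to $0$ and the base generators $\widetilde f_j=(f_j,\Phi(f_j))$ to $f_j$, the pullback $p^*D_B$ has coefficient $0$ on each $\rho_i$ and coefficient $b_j$ on each $\widetilde\tau_j$; indeed its polytope is $\Delta_B$ embedded through $M_B\hookrightarrow M$. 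Hence $p^*D_B+\widetilde D_h$ has coefficients $h_i$ on the fiber rays $\rho_i$ and $b_j$ on the base rays $\widetilde\tau_j$, so that
\[
P=\{(\lambda,x)\in (M_F)_\R\times(M_B)_\R \;:\; \langle \lambda,e_i\rangle\le h_i,\ \langle x,f_j\rangle\le b_j-\langle\Phi(f_j),\lambda\rangle\}.
\]
The projection of $P$ to $(M_F)_\R$ is exactly $\Delta_h$, the polytope of $D_h$ on $X_{\Sigma_F}$, and the fiber of $P$ over $\lambda\in\Delta_h$ is $\{x:\langle x,f_j\rangle\le b_j-\langle\Phi(f_j),\lambda\rangle\}$, which by the description of the linear family $\lambda\mapsto\Delta_\lambda$ recalled before the theorem is the (virtual) polytope $\Delta_B+\Delta_\lambda$ representing $D_B+c(\lambda)$ on $X_{\Sigma_B}$. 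In other words $P$ is the lift $\widetilde\Delta_h$ with each fiber enlarged by $\Delta_B$, matching the polytope attached to $p^*D_B+\widetilde D_h$ described before the statement.

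Next, assuming first that all polytopes in sight are genuinely convex, I would compute $\Vol(P)$ by Fubini. Because $M=M_B\times M_F$ splits, the Lebesgue measure on $M_\R$ normalized with respect to $M$ factors as the product of the measure $\mu$ on $(M_F)_\R$ normalized with respect to $M_F$ and the measure on $(M_B)_\R$ normalized with respect to $M_B$. Thus
\[
\Vol(P)=\int_{\Delta_h}\Vol_k\big(\Delta_B+\Delta_\lambda\big)\,\diff\mu(\lambda).
\]
Applying the classical \BKK theorem on the $k$-dimensional variety $X_{\Sigma_B}$ to the fiber gives $(D_B+c(\lambda))^k=k!\,\Vol_k(\Delta_B+\Delta_\lambda)$, while applying it on the $(n+k)$-dimensional variety $X_\Sigma$ gives $(p^*D_B+\widetilde D_h)^{n+k}=(n+k)!\,\Vol(P)$. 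Substituting the former into the Fubini identity and clearing factorials yields exactly $k!\,(p^*D_B+\widetilde D_h)^{n+k}=(n+k)!\int_{\Delta_h}(D_B+c(\lambda))^k\,\diff\mu(\lambda)$.

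The main obstacle is to upgrade this from the convex (ample) case to arbitrary, possibly virtual, polytopes, since $D_B$ and $\widetilde D_h$ need not be ample. I would handle this by polynomiality: both sides are polynomial functions of the coefficients $(h_1,\dots,h_r,b_1,\dots,b_s)$ of total degree $n+k$ --- the left-hand side as an intersection number and the right-hand side as the integral over $\Delta_h$ of a polynomial in $\lambda$ and the $b_j$ --- and, by the convex computation above, they coincide on the nonempty open cone of coefficients for which $\Delta_h$ and every fiber $\Delta_B+\Delta_\lambda$ are honestly convex. Two polynomials agreeing on an open set agree identically, which gives the general statement. This step requires that the lift, its volume, and the per-fiber integration be defined consistently for virtual polytopes, which is exactly the content of \cite[Section~10.3]{hofscheier2023cohomology} (building on \cite{PK92,KP}); granting that, the Fubini computation and its polynomial extension are routine.
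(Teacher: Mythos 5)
Your proof is correct and follows essentially the same route as the paper's: apply the classical BKK theorem on $X_\Sigma$, identify the (virtual) polytope of $p^*D_B+\widetilde D_h$ as the lift $\widetilde\Delta_h$ with each fiber enlarged by the polytope of $D_B$, compute its volume fiberwise by Fubini, and convert the fiber volumes back into intersection numbers on $X_{\Sigma_B}$ by BKK again. The only real difference is that you make the passage from the ample/convex case to general (virtual) divisors explicit via a polynomiality argument, whereas the paper absorbs this step into the virtual-polytope formalism of \cite{PK92,KP} and \cite[Section~10.3]{hofscheier2023cohomology}.
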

 
 \begin{proof}
 Recall that the (virtual) polytope representing the divisor $\widetilde D_h$ on $X_\Sigma$ is given as a lift $\widetilde\Delta_h$.
  Therefore, by the classical BKK theorem applied to $X_\Sigma$ we know that the self-intersection index of $(p^*D_B+\widetilde  D_h)$ is equal to $(n+k)!\cdot\Vol(\widetilde\Delta_h+ P_B)$, where $P_B\subset (M_B)_\R\subset M_\R$ is a polytope representing divisor $D_B$ on $X_{\Sigma_B}$.

  But since $P$ is contained in a coordinate subspace, we can compute the volume $\Vol_M(\widetilde\Delta_h+ P)$ as
\[
\Vol(\widetilde\Delta_h+ P) = \int_{\Delta_h} \Vol_{M_B}(\Delta_\lambda+P)d\mu(\lambda).
\]
Moreover, by the classical BKK theorem applied to $X_{\Sigma_B}$, the integrant in the expression above is equal to $\frac{1}{k!}\cdot (c(\lambda)+D_B)^k$, which finishes the proof.
\end{proof}

\subsection{Chern classes of fibered toric varieties}\label{sec:chern} We finish this section with the conjectual formula for the Chern classes of smooth toric variety bundles which we varify in the case of fibered toric varieties.

First let us recall the classical formula for the Chern classes of tangent bundle of a smooth complete toric variety $X_{\Sigma_F}$. Let $\rho_1,\ldots,\rho_r$ be the rays of $\Sigma_{F}$ and let $D_1,\ldots D_r$ be the corresponding torus invariant divisors. We further denote by $[D_1],\ldots,[D_r]\in H^2(X_{\Sigma_F},\Z)$ the classes that are Poincar\'e dual to divisors $D_1,\ldots D_r$. One has the following exact sequence of vector bundles on $X_{\Sigma_F}$:
\[
0\to \Pic(X_\Sigma)^*\otimes_\C \Om_{X_{\Sigma_F}} \to \bigoplus_{i=1}^{r} \Om_{X_{\Sigma_F}}(D_i) \to \cT_{X_{\Sigma_F}}\to 0,
\]
where $\cT_{X_{\Sigma_F}}$ denotes a tangent bundle of $X_{\Sigma_F}$. By the Whitney sum formula we get that
$c(\cT_{X_{\Sigma_F}}) = \prod_{i=1}^r (1+t[D_i])$,
where by $c(\cT_{X_{\Sigma_F}})$ we denote the Chern polynomial:
\[
c(\cT_{X_{\Sigma_F}}) = c_0(\cT_{X_{\Sigma_F}})+c_1(\cT_{X_{\Sigma_F}})t+\ldots+c_{\dim(X_{\Sigma_F})}(\cT_{X_{\Sigma_F}})t^{\dim X}.
\]
In particular, the above calculation implies that the $i$-th Chern class of the tangent bundle is equal the sum of classes that are Poincar\'e dual to the codimension $i$ orbit closures in $X_{\Sigma_F}$:
\[
c_i(\cT_{X_{\Sigma_F}}) = \sum_{\sigma\in\Sigma, \dim\sigma=i} [O_\sigma] \in H^{2i}(X_{\Sigma_F},\Z).
\]

Now, let $E\to B$ be a $T_F$-principal bundle over a smooth complete base $B$ and let $\Sigma_F$ be a smooth complete fan. We would like to understand the Chern classes of the tangent bundle of the corresponding toric variety bundle $E_{\Sigma_F}$. Recall that for every ray $\rho_i$ one has the corresponding $T_F$-invariant divisor $\widetilde D_i$ on $E_{\Sigma_F}$. 
\begin{conjecture}\label{conj:chern}
    Let $p\colon E_{\Sigma_F}\to B$ be a toric variety bundle as before. Then the Chern polynomial of the tangent bundle of $E_{\Sigma_F}$ can be computed as
    \[
    c(\cT_{E_{\Sigma_F}}) = p^*(c(T_B))\cdot \prod_{i=1}^r(1+[\widetilde D_i]\cdot t).
    \]
\end{conjecture}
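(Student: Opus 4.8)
The plan is to split off the contribution of the base by the relative tangent sequence, and then reduce the remaining fibrewise statement to the already-treated case of a toric base. First I would use the relative tangent sequence of the smooth fibration $p\colon E_{\Sigma_F}\to B$,
\[
0\to \cT_{E_{\Sigma_F}/B}\to \cT_{E_{\Sigma_F}}\to p^*\cT_B\to 0 ,
\]
where $\cT_{E_{\Sigma_F}/B}$ is the vertical (relative) tangent bundle. By the Whitney sum formula this gives $c(\cT_{E_{\Sigma_F}})=p^*\bigl(c(\cT_B)\bigr)\cdot c(\cT_{E_{\Sigma_F}/B})$, so Conjecture~\ref{conj:chern} is equivalent to the purely fibrewise identity $c(\cT_{E_{\Sigma_F}/B})=\prod_{i=1}^r(1+[\widetilde D_i]\,t)$. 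The two structural facts I would exploit are that the relative tangent bundle is itself an associated bundle, $\cT_{E_{\Sigma_F}/B}=E\times_{T_F}\cT_{X_{\Sigma_F}}$, and that both $\cT_{E_{\Sigma_F}/B}$ and the divisor classes $[\widetilde D_i]=[E\times_{T_F}D_i]$ are stable under base change along morphisms into $B$.

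The main step is to realise $E\to B$ as a pullback of a bundle over a \emph{toric} base. Using the classification of $T_F$-principal bundles by homomorphisms $c\colon M_F\to\Pic(B)$ (Proposition~\ref{prop:toricbun}), write $M_F$ with basis $u_1,\dots,u_n$ and set $L_j=c(u_j)\in\Pic(B)$. For each $j$ I would choose a very ample $A_j$ large enough that $A_j\otimes L_j$ is also very ample, obtaining morphisms $\phi_j\colon B\to\p^{a_j}$ and $\psi_j\colon B\to\p^{b_j}$ with $\phi_j^*\Om(1)=A_j\otimes L_j$ and $\psi_j^*\Om(1)=A_j$, so that $L_j=\phi_j^*\Om(1)\otimes\psi_j^*\Om(-1)$. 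Assembling these into $g=\prod_j(\phi_j,\psi_j)\colon B\to X_{\Sigma_B}$, where $X_{\Sigma_B}=\prod_j\bigl(\p^{a_j}\times\p^{b_j}\bigr)$ is a smooth projective toric variety, and letting $\mathcal M_j$ restrict to $\Om(1)$, $\Om(-1)$ on the two factors, we have $g^*\mathcal M_j=L_j$. Thus the homomorphism $u_j\mapsto\mathcal M_j$ defines a universal $T_F$-principal bundle $\mathcal E\to X_{\Sigma_B}$ with $g^*\mathcal E=E$. Since forming associated bundles commutes with pullback of principal bundles, $E_{\Sigma_F}=g^*\mathcal E_{\Sigma_F}=B\times_{X_{\Sigma_B}}\mathcal E_{\Sigma_F}$ is a Cartesian square, and $\mathcal E_{\Sigma_F}$ is a fibered toric variety by Theorem~\ref{thm:torbuntor}.

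I would then conclude by naturality. Base change along the Cartesian square identifies $\cT_{E_{\Sigma_F}/B}$ with the pullback of $\cT_{\mathcal E_{\Sigma_F}/X_{\Sigma_B}}$ and carries the universal invariant divisors to the $\widetilde D_i$. In the fibered case the total space $\mathcal E_{\Sigma_F}$ is the toric variety of the fan $\Sigma_B\ltimes_\Phi\Sigma_F$, so the classical toric Chern formula applies: splitting the product over all rays into the base rays $\widetilde\tau_j$ (which give $p^*c(\cT_{X_{\Sigma_B}})$, since $[\widetilde D_{\widetilde\tau_j}]=p^*[D_{\tau_j}]$) and the fibre rays $\rho_i$, one reads off $c(\cT_{\mathcal E_{\Sigma_F}/X_{\Sigma_B}})=\prod_i(1+[\widetilde D_i^{\,\mathrm{univ}}]\,t)$. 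Pulling this identity back by $g$ yields the fibrewise identity for $E_{\Sigma_F}$, and combining it with the relative tangent sequence proves the conjecture.

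I expect the realisation step to be the main obstacle: writing an arbitrary $c\colon M_F\to\Pic(B)$ as the pullback of the classifying data of a bundle over a toric variety needs each $L_j$ to be a difference of very ample line bundles, which is automatic for $B$ projective but requires separate care if $B$ is only assumed complete. A second, more hands-on route avoids this by globalising the $T_F$-equivariant Euler sequence of $X_{\Sigma_F}$ directly to $E_{\Sigma_F}$ through the functor $E\times_{T_F}(-)$; there the difficulty migrates to controlling the $T_F$-linearizations so that $E\times_{T_F}\Om_{X_{\Sigma_F}}(D_i)=\Om_{E_{\Sigma_F}}(\widetilde D_i)$ exactly while the kernel term $E\times_{T_F}\bigl(\Pic(X_{\Sigma_F})^*\otimes\Om_{X_{\Sigma_F}}\bigr)$ contributes trivially, the required compatibility being precisely the relations $\sum_i\langle e_i,\lambda\rangle[\widetilde D_i]=c_{top}(\lambda)$ of Theorem~\ref{thm:US}.
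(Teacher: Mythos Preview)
The paper does \emph{not} prove Conjecture~\ref{conj:chern} in the generality you are attacking. It only establishes the special case where the base is itself toric (the Proposition immediately following the conjecture), and its argument is exactly the ray-splitting you reproduce in your third paragraph: apply the classical toric Chern formula to $X_{\Sigma_B\ltimes_\Phi\Sigma_F}$, observe $[D_{\widetilde\tau_j}]=p^*[D_{\tau_j}]$, and factor the product over rays accordingly. So on that sub-argument you agree with the paper.

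Your proposal is therefore strictly more ambitious than what the paper claims: you are trying to deduce the full conjecture from the fibered case via a classifying map $g\colon B\to X_{\Sigma_B}$ into a product of projective spaces. This reduction is genuinely different from anything in the paper, and as far as I can see it is sound for projective $B$: the Cartesian square $E_{\Sigma_F}=B\times_{X_{\Sigma_B}}\mathcal E_{\Sigma_F}$ does give $\cT_{E_{\Sigma_F}/B}=\tilde g^*\cT_{\mathcal E_{\Sigma_F}/X_{\Sigma_B}}$ and $[\widetilde D_i]=\tilde g^*[\widetilde D_i^{\,\mathrm{univ}}]$, and cancelling the invertible factor $p^*c(\cT_{X_{\Sigma_B}})$ from the toric identity legitimately isolates $c(\cT_{\mathcal E_{\Sigma_F}/X_{\Sigma_B}})=\prod_i(1+[\widetilde D_i^{\,\mathrm{univ}}]t)$. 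What this buys you is a proof of the conjecture over projective bases, which the paper explicitly leaves open; what the paper's restricted approach buys is that it needs nothing beyond the Stanley--Reisner description and avoids the relative tangent sequence and base-change machinery entirely.

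The caveat you already flag is the real one: writing each $L_j$ as a difference of very ample line bundles is automatic for $B$ projective but not for $B$ merely complete, so your argument as written does not cover the full ``smooth complete'' hypothesis in the paper's setup. Your alternative route through the globalised Euler sequence is also reasonable; the linearization issue you isolate is that while the natural $T_F$-linearization on $\Om_{X_{\Sigma_F}}(D_i)$ does give $E\times_{T_F}\Om(D_i)\simeq\Om_{E_{\Sigma_F}}(\widetilde D_i)$, the induced linearization on the kernel $\Pic(X_{\Sigma_F})^*\otimes\Om$ is not the trivial one, so its associated bundle on $E_{\Sigma_F}$ is a sum of line bundles of the form $p^*\cL_\lambda$ rather than a trivial bundle. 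Its total Chern class is then $\prod_\lambda(1+p^*c_{top}(\lambda)t)$ for a basis of relations, and one has to check that this factor is absorbed by rewriting $\prod_i(1+[\widetilde D_i]t)$ using the relations $\sum_i\langle e_i,\lambda\rangle[\widetilde D_i]=c_{top}(\lambda)$; this is a genuine computation, not a tautology.
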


\begin{proposition}
    Conjecture~\ref{conj:chern} is true for smooth complete fibered toric varieties.
\end{proposition}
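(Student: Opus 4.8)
The plan is to exploit the fact that, for a fibered toric variety, the total space $X_\Sigma$ is itself a smooth complete toric variety, so the classical toric Chern polynomial formula recalled above applies verbatim to it. Since $\Sigma = \Sigma_B \ltimes_\Phi \Sigma_F$, its rays are precisely the fiber rays $\rho_1, \ldots, \rho_r$ (generated by $(0,e_i)$) together with the lifted base rays $\widetilde\tau_1, \ldots, \widetilde\tau_s$ (generated by $\widetilde f_j = (f_j, \Phi(f_j))$), as already established in the proof of Theorem~\ref{toricUS}. Applying $c(\cT_{X_\Sigma}) = \prod_{\rho}(1 + [D_\rho]\, t)$ over all rays $\rho$ of $\Sigma$ and grouping the two families, I would obtain
\[
c(\cT_{X_\Sigma}) = \prod_{i=1}^r \bigl(1 + [D_{\rho_i}]\, t\bigr) \cdot \prod_{j=1}^s \bigl(1 + [D_{\widetilde\tau_j}]\, t\bigr).
\]
The divisor $D_{\rho_i}$ attached to a fiber ray is by definition the $T_F$-invariant divisor $\widetilde D_i$, so the first product is already the desired $\prod_{i=1}^r (1 + [\widetilde D_i]\, t)$.

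It then remains to identify the second product with $p^*(c(T_B))$, and the crux of the argument is the identity $p^*[D_{\tau_j}] = [D_{\widetilde\tau_j}]$ for each $j$. To prove this I would use the behaviour of Cartier divisors under toric morphisms. Since $\Sigma_B$ is smooth, the prime invariant divisor $D_{\tau_j}$ on $X_{\Sigma_B}$ is Cartier and is cut out by the piecewise linear function $h_j$ on $\Sigma_B$ determined by $h_j(f_{j'}) = \delta_{jj'}$. The pullback line bundle satisfies $p^*\cL_{h_j} = \cL_{h_j \circ \mathrm{pr}}$, where $\mathrm{pr}\colon N_B \times N_F \to N_B$ is the lattice projection inducing $p$. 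Evaluating $h_j \circ \mathrm{pr}$ on the primitive generators of the rays of $\Sigma$ gives $h_j(\mathrm{pr}(0,e_i)) = h_j(0) = 0$ on each fiber ray and $h_j(\mathrm{pr}(f_{j'},\Phi(f_{j'}))) = h_j(f_{j'}) = \delta_{jj'}$ on each lifted base ray, whence $p^* D_{\tau_j} = D_{\widetilde\tau_j}$ as divisors and $p^*[D_{\tau_j}] = [D_{\widetilde\tau_j}]$ in cohomology.

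With this identity in hand the computation closes immediately. Because $p^*$ is a ring homomorphism and $c(T_B) = \prod_{j=1}^s (1 + [D_{\tau_j}]\, t)$ by the classical formula applied to the base $X_{\Sigma_B}$, I get $p^*(c(T_B)) = \prod_{j=1}^s (1 + [D_{\widetilde\tau_j}]\, t)$, which is exactly the second product above. Substituting back yields
\[
c(\cT_{X_\Sigma}) = p^*\bigl(c(T_B)\bigr) \cdot \prod_{i=1}^r \bigl(1 + [\widetilde D_i]\, t\bigr),
\]
the formula of Conjecture~\ref{conj:chern}. I expect the only genuine point requiring care to be the pullback computation $p^*[D_{\tau_j}] = [D_{\widetilde\tau_j}]$ — specifically, verifying that no fiber ray contributes and that the multiplicity along $\widetilde\tau_j$ is exactly one; the remainder is just assembling the two instances of the classical toric Chern formula along the ray decomposition of the twisted product.
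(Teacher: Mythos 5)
Your proposal is correct and follows essentially the same route as the paper's proof: apply the classical toric Chern class formula to the smooth complete toric variety $X_\Sigma$, split the product over rays into the fiber rays $\rho_i$ and lifted base rays $\widetilde\tau_j$, and use $p^*D_{\tau_j} = D_{\widetilde\tau_j}$ to recognize the second factor as $p^*(c(\cT_{X_{\Sigma_B}}))$. The only difference is that you verify the pullback identity $p^*D_{\tau_j} = D_{\widetilde\tau_j}$ explicitly via piecewise linear functions, whereas the paper asserts it without proof; your verification is a correct and welcome addition.
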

\begin{proof}
    Indeed, let $X_\Sigma$ be a fibered toric variety with $\Sigma = \Sigma_B\ltimes_\Phi \Sigma_F$. 
    Recall that $\Sigma$ has rays $\rho_i$ generated by where $(0,e_i)\in N_B\times N_F=N$ and $\widetilde\tau_j$ generated by $\widetilde f_j=(f_j, \Phi(f_j))\in N$. In particular, one has $D_{\widetilde\tau_j}=p^*(D_{\tau_j})$, where $p:X_\Sigma\to X_{\Sigma_{B}}$ is the toric fibration map. Then the Chern polynomial $c(\cT_{X_\Sigma})$ can be computed as 
    \[
    c(\cT_{X_\Sigma})= \prod_{i=1}^r(1+[D_{\rho_i}])\cdot\prod_{j=1}^s(1+[D_{\widetilde\tau_j}])= \cdot \prod_{i=1}^r(1+[D_{\rho_i}])p^*\prod_{j=1}^s(1+[D_{\tau_j}]) = \prod_{i=1}^r(1+[D_{\rho_i}])\cdot p^*(c(\cT_{X_{\Sigma_B}})),
    \]
    which finishes the proof.
\end{proof}

\section{Equivariant cohomology of toric varieties}\label{sec:eq}
In this section we will use descriptions for the cohomology ring of fibered toric varieties as in Theorem~\ref{toricUS} to compute the $T$-equivariant cohomology of toric varieties. We refer to \cite{anderson2012introduction,anderson2023equivariant} for the introduction to equivivariant cohomology.

Let $X_\Sigma$ be a smooth complete toric variety with respect to a torus $T\simeq (\C^*)^n$. Then the equivariant cohomology ring $H_T^*(X_\Sigma,\R)$ is defined as the cohomology ring of so-called Borel construction:
\[
H_T^*(X_\Sigma,\R) = H^*(ET\times_T X_\Sigma,\R),
\]
where $ET\to BT$ is the universal  $T$-principal bundle. More concretely, $ET= (\C^\infty\setminus 0)^n$ with coordinatewise action of $T\simeq (\C^*)^n$ and $BT= ET/T=(\p^\infty)^n$. In particular, $ET\times_T X_\Sigma$ is a toric variety bundle over the product of  infinite dimensional projective spaces. One would like to use Theorem~\ref{toricUS} to describe $H^*(ET\times_T X_\Sigma,\R)$. However, it is not directly applicable as the base is infinite dimensional.

To apply our Theorem~\ref{toricUS}, we use the standard approximation technique which, in particular, was used in \cite{edidin1998equivariant} to define equivariant Chow rings in algebraic geometry. The idea is to replace $ET$ and $BT$ with a sequence of finite dimensional spaces $ET_m$ and $BT_m$ which can be used to compute $H_T^i(X_\Sigma)$ for small $i$. The main tool for us is the following lemma (see for example \cite[Lemma~1.3]{anderson2012introduction}).

\begin{lemma}\label{lem:apr}
 Suppose $ET_m$ is any (connected) space with a free $T$-action,
and $H^i(ET_m,\R) = 0$ for $0 < i < k(m)$ (for some integer $k(m)$). Then for any $X$ with a $T$-action,
there are natural isomorphisms
\[
H^i(ET_m \times_T X,\R) \simeq H^i(ET \times_T X,\R) =: H^i_T(X,\R)
\]
for any $i < k(m)$.
\end{lemma}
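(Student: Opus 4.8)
The plan is to prove Lemma~\ref{lem:apr} by the standard fibration/spectral sequence comparison between the finite approximation $ET_m \times_T X$ and the Borel construction $ET \times_T X$. First I would observe that both spaces fiber over a common base with fiber $X$: the projections
\[
ET_m \times_T X \to ET_m/T, \qquad ET \times_T X \to BT
\]
are fiber bundles with fiber $X$, because $ET_m$ and $ET$ carry free $T$-actions. The key structural input is that $ET \to BT$ is a universal bundle, so it is classified by a map $BT_m := ET_m/T \to BT$, and this classifying map is covered by a $T$-equivariant map $ET_m \to ET$. This equivariant map induces a morphism of fiber bundles
\[
ET_m \times_T X \longrightarrow ET \times_T X
\]
lying over $BT_m \to BT$, and it is this morphism whose effect on cohomology I would analyze.

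The main technical step is to control the cohomology of the base. Since $ET$ is contractible, $BT = ET/T$ is a model for the classifying space, and the fibration $T \to ET_m \to BT_m$ together with the hypothesis $H^i(ET_m,\R)=0$ for $0<i<k(m)$ shows, via the Serre spectral sequence (or directly since $ET_m$ is $(k(m)-1)$-connected in cohomology), that the classifying map $BT_m \to BT$ induces an isomorphism $H^i(BT,\R) \xrightarrow{\sim} H^i(BT_m,\R)$ in the range $i < k(m)$. Concretely, $ET_m$ being cohomologically $(k(m)-1)$-connected means it approximates the contractible $ET$ through that range, so the respective Borel-type quotients have matching low-degree cohomology.

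With the base comparison in hand, I would run the Leray--Serre spectral sequence for each of the two fiber bundles with fiber $X$, using $\R$ coefficients. Both spectral sequences have $E_2$-page $H^p(\text{base};\mathcal{H}^q(X))$, and the morphism of bundles induces a map of spectral sequences. On $E_2$-terms this map is an isomorphism in total degree $<k(m)$: it is an isomorphism on the base cohomology in the relevant range by the previous paragraph, and it is the identity on the fiber cohomology $H^q(X,\R)$ since the map restricts to (a map homotopic to) the identity on fibers. By the comparison theorem for spectral sequences (Zeeman's comparison theorem, or simply naturality together with the five lemma applied inductively across the pages), an isomorphism on $E_2$ in a range of total degrees yields an isomorphism on the abutments in that range, giving the desired
\[
H^i(ET_m \times_T X,\R) \simeq H^i(ET \times_T X,\R)
\]
for $i < k(m)$. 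The naturality of the whole construction makes these isomorphisms natural in $X$.

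The hard part will be making the range of degrees bookkeeping precise: one must check that the isomorphism on base cohomology holds not merely for $H^i(BT)$ with $i<k(m)$ but uniformly enough across the filtration degrees $p$ so that every entry $E_2^{p,q}$ with $p+q<k(m)$ is matched, and that no differential into or out of the critical range spoils the comparison. I expect this is handled cleanly because the obstruction to the base isomorphism sits in degrees $\geq k(m)$, which lie outside the range we care about, so the truncation argument goes through. I would also note that everything is most transparent with field ($\R$) coefficients, where the Künneth-type identification of the $E_2$-page and the absence of extension problems make the comparison immediate.
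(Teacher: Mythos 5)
The paper does not actually prove this lemma: it is quoted from \cite[Lemma~1.3]{anderson2012introduction}, and the standard proof there (also in \cite{edidin1998equivariant}) is the ``mixing'' argument, which is quite different from yours. The diagonal $T$-action on $ET\times ET_m$ is free, and the two projections give fiber bundles $(ET\times ET_m)\times_T X\to ET\times_T X$ with fiber $ET_m$, and $(ET\times ET_m)\times_T X\to ET_m\times_T X$ with fiber $ET$. In the Leray--Serre spectral sequence of each projection the rows $0<q<k(m)$ (respectively, all rows $q>0$) vanish identically, because the coefficient local systems have stalks $H^q(\text{fiber},\R)=0$; hence both projections induce isomorphisms on $H^i(\cdot,\R)$ for $i<k(m)$, with no comparison theorem and no simple-connectivity hypotheses, and the lemma follows for all $X$ at once. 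Your plan---fibering both Borel constructions over their bases with fiber $X$ and comparing the two spectral sequences over $BT_m\to BT$---is a genuinely different route, but it is heavier and, as written, has two gaps.

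First, your key input, that $BT_m\to BT$ induces an isomorphism on $H^i(\cdot,\R)$ for $i<k(m)$, is precisely the case $X=\pt$ of the lemma being proved, and ``via the Serre spectral sequence\dots directly'' is not a proof: deducing the base map from the fiber map ($T$) and the total-space map ($ET_m\to ET$) is the converse direction of Zeeman's comparison theorem, a nontrivial result with its own edge hypotheses; note also that under the stated assumptions $BT_m$ need not be simply connected (only $H^1(ET_m,\R)=0$ is assumed; take $ET_m=(\C^{m+1}\setminus\{0\})\times Y$ with $Y$ a high-dimensional homology sphere with finite nontrivial $\pi_1$), so the simplicity of all coefficient systems must itself be argued---it holds because the bundles are associated to principal bundles of the connected group $T$, equivalently are pulled back from the simply connected $BT$. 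Second, and more seriously, your closing dismissal of the boundary bookkeeping is wrong. To obtain isomorphisms of abutments in total degrees $<k(m)$ you need the $E_2$-map to be \emph{injective} in total degree exactly $k(m)$: for each $r$ the differential $d_r\colon E_r^{k(m)-r,\,r-1}\to E_r^{k(m),\,0}$ carries total degree $k(m)-1$ into the bottom-row corner, and kernels of these differentials are not matched by your comparison unless $H^{k(m)}(BT,\R)\to H^{k(m)}(BT_m,\R)$ is injective---a statement strictly outside the range you established. This is not a formality: already for $T=\C^*$ and $ET_m=\C^{m+1}\setminus\{0\}$ (so $k(m)=2m+1$), injectivity of $H^j(BT,\R)\to H^j(\p^m,\R)$ fails at $j=2m+2=k(m)+1$, so the statement you need sits exactly at the edge. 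The corner injectivity at $j=k(m)$ is in fact true, but the proof of it that I know is the mixing argument above; once you run that argument, you get the lemma for arbitrary $X$ in one step and the double spectral-sequence comparison becomes unnecessary. Alternatively, a carefully stated two-sided Zeeman comparison theorem (with its explicit monomorphism/epimorphism edge conditions) can close both gaps, but it must be invoked with those conditions, not with the bare ``isomorphism on $E_2$ in a range of total degrees'' that you assert.
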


For $T\simeq(\C^*)^n$ one can  approximate $ET,BT$ by the spaces $ET_m=(\C^{m+1}\setminus 0)^n$ and $BT_m=(\p^m)^n$. Since $H^i(ET_m,\R)=0$ for $i\leq 2m$ we get the isomorphism
\[
H^i_T(X_\Sigma,\R) \simeq H^i(ET_m\times_TX_\Sigma,\R)\text{ for any } i\leq2m.
\]
Now $ET_m\times_TX_\Sigma$ is a fibered toric variety, so we can apply Theorem~\ref{toricUS} to compute its cohomology ring. Let $\rho_1,\ldots,\rho_r$ be the rays of $\Sigma$ with primitive generators $e_1,\ldots, e_r$. Let us further denote by $H_i\in H^2(BT_m,\R)$ the pullback of the hyperplane class in the $i$-th component of $BT_m=(\p^m)^n$.
We get that $H^*(ET_m\times_TX_\Sigma,\R) = H^*(BT_m,\R)[x_1,\ldots,x_r]/I+J$,
where $I$ is the Stanley-Reisner ideal of $\Sigma$ and 
\[
J = \left\langle \sum_{i=1}^r \lambda_iH_I-\sum_{i=1}^r \langle e_i,\lambda\rangle x_i\:\big|\: (\lambda_1,\ldots,\lambda_n)\in M\simeq \Z^n\right\rangle.
\]

Since the map $c_{top}\colon M\to H^2(BT_m,\Z)$ is surjective and $H^*(BT_m,\R)\simeq R[H_1,\ldots,H_n]/\langle H_1^{m+1},\ldots,H_n^{m+1}\rangle$ we get
\[
H^*(ET_m\times_TX_\Sigma,\R) = \R[x_1,\ldots,x_r]/(I+K_m),
\]
where $I$ is the Stanley-Reisner ideal of $\Sigma$ as before and 
\[
K_m= \left\langle \left(\sum_{i=1}^r \langle e_i,\lambda_1\rangle x_i\right)^{m+1}\!\!\!\!\!\!\!\!\!\!\!,\ldots,\left(\sum_{i=1}^r \langle e_i,\lambda_n\rangle x_i\right)^{m+1}\right\rangle.
\]

Now, to compute the whole $H_T^*(X_\Sigma,\R)$ one can take the inverse limit of $H^*(ET_m\times_TX_\Sigma,\R) = \R[x_1,\ldots,x_r]/(I+K_m)$. We arrive at the following classical description of the equivivariant cohomology ring of smooth complete toric variety as a Stanley-Reisner ring of its fan:
\[
H^*_T(X_\Sigma)=\varprojlim \R[x_1,\ldots,x_r]/(I+K_m) = \R[x_1,\ldots,x_r]/I.
\]

\providecommand{\bysame}{\leavevmode\hbox to3em{\hrulefill}\thinspace}
\providecommand{\MR}{\relax\ifhmode\unskip\space\fi MR }
\providecommand{\MRhref}[2]{%
  \href{http://www.ams.org/mathscinet-getitem?mr=#1}{#2}
}
\providecommand{\href}[2]{#2}


\begin{thebibliography}{DDRP09}

\bibitem[AF23]{anderson2023equivariant}
David Anderson and William Fulton, \emph{Equivariant cohomology in algebraic
  geometry}, vol. 210, Cambridge University Press, 2023.

\bibitem[AJP14]{assarf2014smooth}
Benjamin Assarf, Michael Joswig, and Andreas Paffenholz, \emph{Smooth fano
  polytopes with many vertices}, Discrete \& Computational Geometry \textbf{52}
  (2014), 153--194.

\bibitem[And12]{anderson2012introduction}
Dave Anderson, \emph{Introduction to equivariant cohomology in algebraic
  geometry}, Contributions to algebraic geometry, EMS Ser. Congr. Rep (2012),
  71--92.

\bibitem[Ber75]{Bernstein}
D.~N. Bernstein, \emph{The number of roots of a system of equations},
  Funkcional. Anal. i Prilo\v{z}en. \textbf{9} (1975), no.~3, 1--4.

\bibitem[BKH76]{BKK}
D.~N. Bernstein, A.~G. Ku\v{s}nirenko, and A.~G. Hovanski\u{\i}, \emph{Newton
  polyhedra}, Uspehi Mat. Nauk \textbf{31} (1976), no.~3(189), 201--202.

\bibitem[BN07]{batyrev2007multiples}
V~Batyrev and B~Nill, \emph{Multiples of lattice polytopes without interior
  lattice points}, Moscow Mathematical Journal \textbf{7} (2007), no.~2,
  195--207.

\bibitem[Bri89]{brion1989groupe}
Michel Brion, \emph{Groupe de picard et nombres caracteristiques des varietes
  spheriques}, Duke Math. J. \textbf{58} (1989), no.~1, 397--424.

\bibitem[CLS11]{tor-var}
D.~A. Cox, J.~B. Little, and H.~K. Schenck, \emph{Toric varieties}, Graduate
  Studies in Mathematics, vol. 124, American Mathematical Society, Providence,
  RI, 2011.

\bibitem[CN21]{carocci2021rubber}
Francesca Carocci and Navid Nabijou, \emph{Rubber tori in the boundary of
  expanded stable maps}, arXiv preprint arXiv:2109.07512 (2021).

\bibitem[CN22]{carocci2022tropical}
\bysame, \emph{Tropical expansions and toric variety bundles}, arXiv preprint
  arXiv:2207.12541 (2022).

\bibitem[DDRP09]{dickenstein2009classifying}
Alicia Dickenstein, Sandra Di~Rocco, and Ragni Piene, \emph{Classifying smooth
  lattice polytopes via toric fibrations}, Advances in Mathematics \textbf{222}
  (2009), no.~1, 240--254.

\bibitem[EG98]{edidin1998equivariant}
Dan Edidin and William Graham, \emph{Equivariant intersection theory},
  Inventiones Mathematicae \textbf{131} (1998), no.~3, 595--634.

\bibitem[GK94]{karsh}
Michael Grossberg and Yael Karshon, \emph{Bott towers, complete integrability,
  and the extended character of representations}, Duke Math. J. \textbf{76}
  (1994), no.~1, 23--58.

\bibitem[HKM23]{hofscheier2023cohomology}
Johannes Hofscheier, Askold Khovanskii, and Leonid Monin, \emph{Cohomology
  rings of toric bundles and the ring of conditions}, Arnold Mathematical
  Journal (2023), 1--51.

\bibitem[Hof19]{roch}
J.~Hofscheier, \emph{The ring of conditions for horospherical homogeneous
  spaces}, Proceedings of the conference Interactions with Lattice Polytopes
  Magdeburg, 2019.

\bibitem[JL05]{joswig2005one}
Michael Joswig and Frank~H Lutz, \emph{One-point suspensions and wreath
  products of polytopes and spheres}, Journal of Combinatorial Theory, Series A
  \textbf{110} (2005), no.~2, 193--216.

\bibitem[Kly90]{klyachko1990equivariant}
Alexander~A Klyachko, \emph{Equivariant bundles on toral varieties},
  Mathematics of the USSR-Izvestiya \textbf{35} (1990), no.~2, 337.

\bibitem[KM21]{khovanskii2021gorenstein}
Askold Khovanskii and Leonid Monin, \emph{Gorenstein algebras and toric
  bundles}, arXiv preprint arXiv:2106.15562 (2021).

\bibitem[KM22]{kaveh2022toric}
Kiumars Kaveh and Christopher Manon, \emph{Toric principal bundles, piecewise
  linear maps and tits buildings}, Mathematische Zeitschrift \textbf{302}
  (2022), no.~3, 1367--1392.

\bibitem[KM23]{kaveh2023toric}
\bysame, \emph{Toric vector bundles, valuations and tropical geometry}, arXiv
  preprint arXiv:2304.11211 (2023).

\bibitem[Kno91]{Knop:SphEmbd}
F.~Knop, \emph{The {L}una-{V}ust theory of spherical embeddings}, Proceedings
  of the {H}yderabad {C}onference on {A}lgebraic {G}roups ({H}yderabad, 1989),
  Manoj Prakashan, Madras, 1991, pp.~225--249.

\bibitem[Kou76]{Kouchnirenko}
A.~G. Kouchnirenko, \emph{Poly\`edres de {N}ewton et nombres de {M}ilnor},
  Invent. Math. \textbf{32} (1976), no.~1, 1--31.

\bibitem[KV18]{kavvil}
K.~Kaveh and E.~Villella, \emph{On a notion of anticanonical class for families
  of convex polytopes}, arXiv preprint arXiv:1802.06674 (2018).

\bibitem[LMR23]{lindberg2023algebraic}
Julia Lindberg, Leonid Monin, and Kemal Rose, \emph{The algebraic degree of
  sparse polynomial optimization}, arXiv preprint arXiv:2308.07765 (2023).

\bibitem[{\O}br07]{obro2007classification}
Mikkel {\O}bro, \emph{Classification of smooth fano polytopes}, PhD Theses,
  Department of Mathematical Sciences, University of Aarhus (2007).

\bibitem[{\O}br08]{obro2008classification}
\bysame, \emph{Classification of terminal simplicial reflexive d-polytopes with
  3 d- 1 vertices}, manuscripta mathematica \textbf{125} (2008), 69--79.

\bibitem[OM78]{oda1978torus}
Tadao Oda and Katsuya Miake, \emph{Torus embeddings and applications: Lectures
  based on a joint work with katsuya miake}, Springer, 1978.

\bibitem[Pay08]{payne2008moduli}
Sam Payne, \emph{Moduli of toric vector bundles}, Compositio Mathematica
  \textbf{144} (2008), no.~5, 1199--1213.

\bibitem[PK92a]{PK92}
A.~V. Pukhlikov and A.~G. Khovanskii, \emph{Finitely additive measures of
  virtual polyhedra}, Algebra i Analiz \textbf{4} (1992), no.~2, 161--185.

\bibitem[PK92b]{KP}
A.~V. Pukhlikov and A.~G. Khovanski\u{\i}, \emph{The {R}iemann-{R}och theorem
  for integrals and sums of quasipolynomials on virtual polytopes}, Algebra i
  Analiz \textbf{4} (1992), no.~4, 188--216.

\bibitem[SU03]{US03}
P.~Sankaran and V.~Uma, \emph{Cohomology of toric bundles}, Comment. Math.
  Helv. \textbf{78} (2003), no.~3, 540--554.

\bibitem[Tim11]{Timashev:HSEE}
D.~A. Timashev, \emph{Homogeneous spaces and equivariant embeddings},
  Encyclopaedia of Mathematical Sciences, vol. 138, Springer, Heidelberg, 2011,
  Invariant Theory and Algebraic Transformation Groups, 8.

\end{thebibliography}
\end{document}